\def\aa{\alpha}   \def\ba{\beta}    \def\da{\delta}   \def\Da{\Delta}
\def\ga{\gamma}      \def\ta{\theta}   
   \def\Sa{\Sigma}     
  \def\vn{\varepsilon}
\def\iy{\infty} \def\sbt{\subset}  \def\fc{\frac} \def\st{\sqrt}
  \def\pl{\partial}
\def\ts{\times}  
\def\na{\nabla} 
\def\bt{\bot}
\def\mb{\mathbb}  \def\ml{\mathcal}      
\def\mm{\mathrm}
\def\lt{\left}       \def\rt{\right}
\def\lae{\langle}    \def\rae{\rangle}
\def\ls{\limits}     \def\mx{\mbox}
       \def\ct{\cdot}      \def\lts{\ldots}
\def\ue{\underline}
\theoremstyle{plain}
\newtheorem{thm}{\textrm{Theorem}}
\newtheorem*{thm1}{\textrm{Theorem}}
\newtheorem{lemma}[thm]{\textrm{Lemma}}
\newtheorem{prop}[thm]{\textrm{Proposition}}
\theoremstyle{definition}
\newtheorem{defn}[thm]{\textrm{Definition}}
\newtheorem*{DP}{\textrm{($\star$) Dirichlet problem for the {\small SS-CMC} equation with symmetric boundary data}}
\theoremstyle{remark}
\newtheoremstyle{note}
     {3pt}
     {3pt}
     {}
     {}
     {\itshape}
     {}
     {.5em}
     {}
\theoremstyle{note}
\author{Kuo-Wei Lee}
\title{Dirichlet problem for the constant mean curvature equation
and CMC foliation in the extended Schwarzschild spacetime}
\begin{document}
\fontsize{12}{18pt plus.5pt minus.4pt}\selectfont
\maketitle
\begin{abstract}
We prove the existence and uniqueness of the Dirichlet problem for spacelike,
spherically symmetric, constant mean curvature equation with symmetric boundary data in the extended Schwarzschild spacetime.
As an application, we completely solve the {\small CMC} foliation conjecture which is posted by Malec and \'{O} Murchadha in \cite{MO}.
\end{abstract}

\section{Introduction}
Spacelike constant mean curvature ({\small CMC}) hypersurfaces in spacetimes have been considered as important objects in general relativity.
This is because constant mean curvature hypersurfaces are used in the analysis on Einstein constraint equations
\cite{L, CYY} and the gauge condition in the Cauchy problem of the Einstein equations \cite{AM, CYR}.
In addition, York suggested the concept of the {\small CMC} foliation and the
{\small CMC} time function on relativistic cosmology \cite{Y}, so {\small CMC} hypersurfaces could characterize
the global structure of cosmological spacetimes.

The Schwarzschild spacetime is the simplest model of a universe containing a star.
Its metric is a solution of the vacuum Einstein equation, and it is spherically symmetric and asymptotically flat.
The maximal analytic extension of the Schwarzschild spacetime was obtained by Kruskal in 1960 and nowadays it is called the Kruskal extension.

Some results of spacelike, spherically symmetric constant mean curvature hypersurfaces ({\small SS-CMC})
in the Schwarzschild spacetime and in the Kruskal extension can be found in Brill, Cavallo, and Isenberg's paper \cite{BCI},
and in Malec and \'{O} Murchadha's papers \cite{MO,MO2}.
In papers \cite{LL1, LL2},
the authors considered the initial value problem of the {\small SS-CMC} equation in the Schwarzschild spacetime.
These {\small SS-CMC} solutions are completely solved and characterized by two constants of integration.
Furthermore, the authors discussed the behaviors of {\small SS-CMC} hypersurfaces near the coordinate singularities,
and then gave the correspondences between {\small SS-CMC} hypersurfaces in the Schwarzschild spacetime and
{\small SS-CMC} hypersurfaces in the Kruskal extension.
Thus, the initial value problem of the {\small SS-CMC} equation in the Kruskal extension is solved as well.

In this paper, we consider the Dirichlet problem for the {\small SS-CMC} equation in the Kruskal extension with $T$-axisymmetric boundary data.
The result is comprehensively stated below:

\begin{thm1} \label{existence}
The Dirichlet problem for the {\small SS-CMC} equation and symmetric boundary data in the Kruskal extension is solvable and the solution is unique.
\end{thm1}

The precise setting of the {\small SS-CMC} Dirichlet problem is in section~\ref{SettingDP}.
For the existence part,
we use the shooting method since the boundary value problem can be reduced to the initial value problem,
which is solved in papers \cite{LL1, LL2}.
The uniqueness part is much difficult.
We introduce the Lorentizian distance function with respect to some spacelike hypersurface.
The maximum point of the Laplacian of the Lorentzian distance function restrict on another spacelike hypersurface
has a good estimate related to the mean curvatures of both spacelike hypersurfaces.
Thus, if the solution of the {\small SS-CMC} Dirichlet problem is not unique,
we can apply the Lorentzian distance function estimate to two of {\small SS-CMC} solutions and then get a contradiction.

As an application, we prove that the $T$-axisymmetric {\small SS-CMC} foliation (we use {\small TSS-CMC} foliation for short)
property conjectured by Malec and \'{O} Murchadha in \cite{MO} is true.
In paper \cite{MO}, they constructed a particular family of {\small TSS-CMC} hypersurfaces and claimed this family foliates the Kruskal extension.
In paper \cite{LL2}, the authors reformulated the {\small TSS-CMC} foliation conjecture and proved some partial results.
In this paper, we explain the existence of the Dirichlet problem is equivalent to the family of
{\small TSS-CMC} hypersurfaces cover the Kruskal extension,
and the uniqueness of the Dirichlet problem is equivalent to any two {\small TSS-CMC} hypersurfaces are disjoint,
and hence the {\small TSS-CMC} foliation conjecture is proved.

The organization of this paper is as follows.
We first give a brief summary of the Schwarzschild spacetime and Kruskal extension in section~\ref{SchwarzschildandKruskal},
and then state the main results of the initial value problem of the {\small SS-CMC} equation in the Kruskal extension in section~\ref{SSCMCinKruskal}.
The Lorentzian distance function and its properties are discussed in section 3.
These results are used to solve the {\small SS-CMC} Dirichlet problem in section 4.
In section 5, we will prove the {\small TSS-CMC} foliation conjecture.

The author would like to thank Yng-Ing Lee and Mao-Pei Tsui for their interests and discussions.
The author is supported by the NSC research grant 103-2115-M-002-013-MY3.

\section{Preliminary}
\subsection{The Schwarzschild spacetime and the Kruskal extension} \label{SchwarzschildandKruskal}
The Schwarzschild spacetime is a $4$-dimensional time-oriented Lorentzian manifold with metric
\begin{align*}
\mathrm{d}s^2=-\left(1-\frac{2M}r\right)\mathrm{d}t^2+\frac1{\left(1-\frac{2M}r\right)}\,\mathrm{d}r^2
+r^2\,\mathrm{d}\theta^2+r^2\sin^2\theta\,\mathrm{d}\phi^2.
\end{align*}
After coordinates change, the Schwarzschild metric can be written as
\begin{align}
\mathrm{d}s^2&=\frac{16M^2\mathrm{e}^{-\frac{r}{2M}}}{r}(-\mathrm{d}T^2+\mathrm{d}X^2)+r^2\,\mathrm{d}\theta^2+r^2\sin^2\theta\,\mathrm{d}\phi^2 \notag \\
&=\frac{16M^2\mathrm{e}^{-\frac{r}{2M}}}{r}\,\mathrm{d}U\mathrm{d}V+r^2\,\mathrm{d}\theta^2+r^2\sin^2\theta\,\mathrm{d}\phi^2, \label{SchMetric2}
\end{align}
where
\begin{align}
\left\{
\begin{array}{l}
\displaystyle(r-2M)\,\mathrm{e}^{\frac{r}{2M}}=X^2-T^2=VU\\
\displaystyle\frac{t}{2M}=\ln\left|\frac{X+T}{X-T}\right|=\ln\left|\frac{V}{U}\right|. \label{trans}
\end{array}
\right.
\end{align}
From (\ref{SchMetric2}), we know that $r=2M$ is only a coordinate singularity.
The Schwarzschild spacetime has a maximal analytic extension, called the Kruskal extension.
It is the union of regions {\tt I}, {\tt I$\!$I}, {\tt I'}, and {\tt I$\!$I'},
where regions {\tt I} and {\tt I$\!$I} correspond to the exterior and interior of one Schwarzschild spacetime, respectively,
and regions {\tt I'} and {\tt I$\!$I'} correspond to the exterior and interior of another Schwarzschild spacetime.
Figure~\ref{KruskalSimple} points out their correspondences and coordinate systems $(X,T)$ or $(U,V)$.

\begin{figure}[h]
\psfrag{A}{\tt I}
\psfrag{B}{\tt I$\!$I}
\psfrag{C}{\tt I$\!$I'}
\psfrag{D}{\tt I'}
\psfrag{E}{$U$}
\psfrag{F}{$V$}
\psfrag{X}{$X$}
\psfrag{T}{$T$}
\psfrag{r}{$r$}
\psfrag{t}{$t$}
\psfrag{M}{\tiny$2M$}
\psfrag{P}{\tiny $\partial_T$}
\centering
\includegraphics[height=50mm,width=71mm]{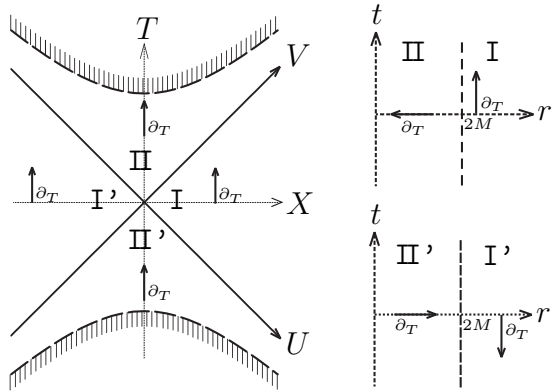}
\caption{The Kruskal extension of Schwarzschild spacetimes.} \label{KruskalSimple}
\end{figure}

Sometimes we will use another null coordinates $(u,v)$ by
\begin{align}
u=t-(r+2M\ln|r-2M|)\quad\mbox{and}\quad v=t+(r+2M\ln|r-2M|), \label{NullCoordinatesuv}
\end{align}
and relations between $(U,V)$ and $(u,v)$ are given by
\begin{align*}
\begin{array}{ccccc}
& \quad \mbox{region {\tt I}} & \quad\mbox{region {\tt I$\!$I}} & \quad \mbox{region {\tt I'}}
& \quad \mbox{region {\tt I$\!$I'}}\\
\hline
U & \quad \mathrm{e}^{-\frac{u}{4M}} & \quad -\mathrm{e}^{-\frac{u}{4M}}
& \quad -\mathrm{e}^{-\frac{u}{4M}} & \quad \mathrm{e}^{-\frac{u}{4M}} \\
V &\quad \mathrm{e}^{\frac{v}{4M}}  & \quad \mathrm{e}^{\frac{v}{4M}}
&\quad -\mathrm{e}^{\frac{v}{4M}}  & \quad-\mathrm{e}^{\frac{v}{4M}}. \\
\end{array}
\end{align*}

In this article, we will take $\partial_T$ as a future-directed timelike vector field in the Kruskal extension.
Note that $\partial_T$ in two Schwarzschild spacetimes has different directions and it is pointed out in Figure~\ref{KruskalSimple}.
Once $\pl_T$ is chosen, for a spacelike hypersurface $\Sa$,
we will choose the normal vector $\vec{n}$ of $\Sa$ as future-directed in the Kruskal extension,
and the mean curvature $H$ of $\Sa$ is defined by $H=\fc13g^{ij}\lae\na_{e_i}\vec{n},e_j\rae$,
where $\{e_i\}_{i=1}^3$ is a basis on $\Sa$.

\subsection{{\small SS-CMC} hypersurfaces in the Kruskal extension} \label{SSCMCinKruskal}
In this subsection,
we will summarize the results in papers \cite{LL1} and \cite{LL2} about spherically symmetric, spacelike constant mean curvature ({\small SS-CMC})
hypersurfaces in the Schwarzschild spacetime and Kruskal extension.
These formulae and arguments will be used to deal with the {\small SS-CMC} Dirichlet problem in section~\ref{section4}.
We refer to papers \cite{LL1, LL2} for more details.

In papers \cite{LL1, LL2},
we considered the initial value problem of the {\small SS-CMC} equation in the Schwarzschild spacetime and Kruskal extension.
We first studied {\small SS-CMC} hypersurfaces in the standard Schwarzschild coordinates.
In most of cases, an {\small SS-CMC} hypersurface $\Sa$ can be locally written as a graph of
$(t=f(r),r,\ta,\phi)$ in either exterior or interior of the Schwarzschild spacetime.
Direct computation gives the {\small SS-CMC} equation, which is a second-order ordinary differential equation:
\begin{align}
f''+\left(\left(\frac1{h}-(f')^2h\right)\left(\frac{2h}{r}+\frac{h'}2\right)+\frac{h'}{h}\right)f'\pm 3H\left(\frac1{h}-(f')^2h\right)^{\frac32}=0,
\label{SSCMCeqn}
\end{align}
where $h(r)=1-\fc{2M}{r}$, $H\in\mb{R}$ is the constant mean curvature,
and the spacelike condition is equivalent to $\frac1{h}-(f')^2h>0$.
The choice of $\pm$ signs in the equation (\ref{SSCMCeqn}) depends on different regions
that cause different directions of the normal vector and different pieces of an {\small SS-CMC} hypersurface.
In each region and where $f(r)$ is defined, the solution $f(r)$ is uniquely determined by two constants of integration, denoted by $c$ and $\bar{c}$,
where $c$ controls the slope of the function of $f(r)$ and $\bar{c}$ represents the $t$-direction translation.
Thus, if we require that an {\small SS-CMC} hypersurface $\Sa_{H,c,\bar{c}}$ passes through a point $(t_0,r_0)$, then $\bar{c}$ is determined.

All solutions of the {\small SS-CMC} equation can be expressed in the integration form,
which are completely discussed in \cite{LL1, LL2}.
Because of symmetry, only parts of expressions of $\Sa_{H,c,\bar{c}}$ are needed in this paper,
and we summarize these formulae below.
\begin{itemize}
\item[(A)]
For $\Sa_{H,c,\bar{c}}$ in the Schwarzschild exterior (maps to region {\tt I}), then
\begin{align*}
f(r;H,c,\bar{c})=\int_{r_1}^r\frac{l(x;H,c)}{h(x)\st{1+l^2(x;H,c)}}\,\mathrm{d}x+\bar{c},
\end{align*}
where $l(r;H,c)=\frac{1}{\sqrt{h(r)}}\left(Hr+\frac{c}{r^2}\right)$, and $r_1\in(2M,\iy)$ is a fixed number.\\[2mm]
\ue{If $c<-8M^3H$}, then $f(r)\to\iy$ as $r\to 2M^+$ with asymptotic behavior $f'(r)=\fc1{h(r)}+\bar{f}'(r)$, where
\begin{align}
\bar{f}'(r)=\fc{r^4}{(Hr^3+c)^2+r^3(r-2M)-(Hr^3+c)\st{(Hr^3+c)^2+r^3(r-2M)}}. \label{barfeqn}
\end{align}
\ue{If $c>-8M^3H$}, then $f(r)\to-\iy$ as $r\to 2M^+$ with asymptotic behavior $f'(r)=-\fc1{h(r)}+\tilde{f}'(r)$, where
\begin{align}
\tilde{f}'(r)=\fc{-r^4}{(Hr^3+c)^2+r^3(r-2M)+(Hr^3+c)\st{(Hr^3+c)^2+r^3(r-2M)}}. \label{tildefeqn}
\end{align}
\ue{If $c=-8M^3H$}, then $f(r)$ tends to some finite value as $r\to 2M^+$.
\item[(B)]
For $\Sa_{H,c,\bar{c}}$ in the Schwarzschild interior (region {\tt I\!I}), then
\begin{align*}
f(r;H,c,\bar{c})
=\int_{r_2}^r\fc{l(x;H,c)}{\mp h(x)\st{l^2(x;H,c)-1}}\,\mathrm{d}x+\bar{c},
\end{align*}
where $l(r;H,c)=\frac1{\sqrt{-h(r)}}\left(-Hr-\frac{c}{r^2}\right)>1$, and $r_2$ is in the domain of $f(r)$.\\[2mm]
If $r\to 2M^-$ and $f'(r)>0$, then $f(r)\to\iy$ with asymptotic behavior $f'(r)=-\fc1{h(r)}+\bar{f}'(r)$, where $\bar{f}'(r)$ is (\ref{barfeqn}).
\item[(C)]
For $\Sa_{H,c,\bar{c}}$ in the Schwarzschild interior (region {\tt I\!I'}), then
\begin{align*}
f(r;H,c,\bar{c})
=\int_{r_4}^r\frac{l(x;H,c)}{\pm h(x)\st{l^2(x;H,c)-1}}\,\mathrm{d}x+\bar{c},
\end{align*}
where $l(r;H,c)=\frac1{\sqrt{-h(r)}}\left(Hr+\frac{c}{r^2}\right)>1$, and $r_4$ is in the domain of $f(r)$.\\[2mm]
If $r\to 2M^-$ and $f'(r)<0$, then $f(r)\to-\iy$ with asymptotic behavior $f'(r)=\fc1{h(r)}+\tilde{f}'(r)$, where $\bar{f}'(r)$ is (\ref{tildefeqn}).
\item[(D)] \ue{If $c<-8M^3H$},
we can find $\bar{c}$ such that hypersurfaces in region {\tt I} and {\tt I\!I} can smoothly glue together at $U=0$ ($r=2M$ and $t=\iy$).\\
\ue{If $c>-8M^3H$},
we can find $\bar{c}$ such that hypersurfaces in region {\tt I} and {\tt I\!I'} can smoothly glue together at $V=0$ ($r=2M$ and $t=-\iy$).\\
\ue{If $c=-8M^3H$},
we can find $\bar{c}$ such that hypersurfaces in region {\tt I} and {\tt I'} can smoothly glue together at $(U,V)=(0)$ ($r=2M$ and $t$ is finite).
\end{itemize}

Only constant slices $r=r_0, r_0\in(0,2M)$ are {\small SS-CMC} hypersurfaces which can {\it not} be written as a graph of $t=f(r)$.
These hypersurfaces are called {\it cylindrical hypersurfaces} and they have constant mean curvature
\begin{align*}
H(r_0)=\fc{2r_0-3M}{3\st{r_0^3(2M-r_0)}} \mx{ in region } {\tt I\!I}, \mx{ or } H(r_0)=\fc{3M-2r_0}{3\st{r_0^3(2M-r_0)}} \mx{ in region } {\tt I\!I'}.
\end{align*}

The behavior of an {\small SS-CMC} hypersurface $\Sa_{H,c,\bar{c}}$ in the Kruskal extension highly depends on the constant of integration $c$.
Precisely, fixed $H\in\mb{R}$, define two functions $k_H(r)$ and $\tilde{k}_H(r)$ on $(0,2M]$ by
\begin{align*}
k_{H}(r)=-Hr^3-r^{\frac32}(2M-r)^{\frac12}\quad\mx{and}\quad
\tilde{k}_{H}(r)=-Hr^3+r^{\frac32}(2M-r)^{\frac12}.
\end{align*}
These two functions come from the condition $l>1$ in the Schwarzschild interior and their graphs are plotted in Figure~\ref{DomainofCMC} (a)\footnote{
All figures in this paper are presented the case $H>0$. Cases $H=0$ or $H\leq 0$ are similarly discussed.}.

Denote $(r_H,c_H)$ by the minimum point of $k_H(r)$ and $(R_H,C_H)$ by the maximum point of $\tilde{k}_H(r)$.
We use $k_H^+(r)$, $k_H^-(r)$, $\tilde{k}_H^+(r)$, and $\tilde{k}_H^-(r)$ to represent their increasing or decreasing part.
Notice that both $r=r_H$ and $r=R_H$ are cylindrical {\small SS-CMC} hypersurfaces with constant mean curvature
$H$ in region {\tt I\!I} and {\tt I\!I'}, respectively.
They correspond to hyperbolas in the Kruskal extension, and thus they divide the Kruskal extension into three regions,
called the top region, the middle region, and the bottom region. See Figure~\ref{DomainofCMC} (b).

\begin{figure}[h]
\hspace*{5mm}
\psfrag{r}{$r$}
\psfrag{t}{$t$}
\psfrag{k}{$k_H(r)$}
\psfrag{K}{$\tilde{k}_H(r)$}
\psfrag{R}{\footnotesize$R_H$}
\psfrag{S}{\footnotesize$r_H$}
\psfrag{M}{\footnotesize$2M$}
\psfrag{H}{$-8M^3H$}
\psfrag{C}{$C_H$}
\psfrag{c}{$c_H$}
\psfrag{l}{\hspace*{-1mm}$L(r)=c$}
\psfrag{P}{$r=0$}
\psfrag{Q}{$r=r_H$}
\psfrag{U}{$r=R_H$}
\psfrag{V}{$r=0$}
\psfrag{A}{\footnotesize top region}
\psfrag{B}{\footnotesize middle region}
\psfrag{N}{\footnotesize bottom region}
\psfrag{Y}{(b)}
\psfrag{Z}{(a)}
\includegraphics[height=55mm,width=121mm]{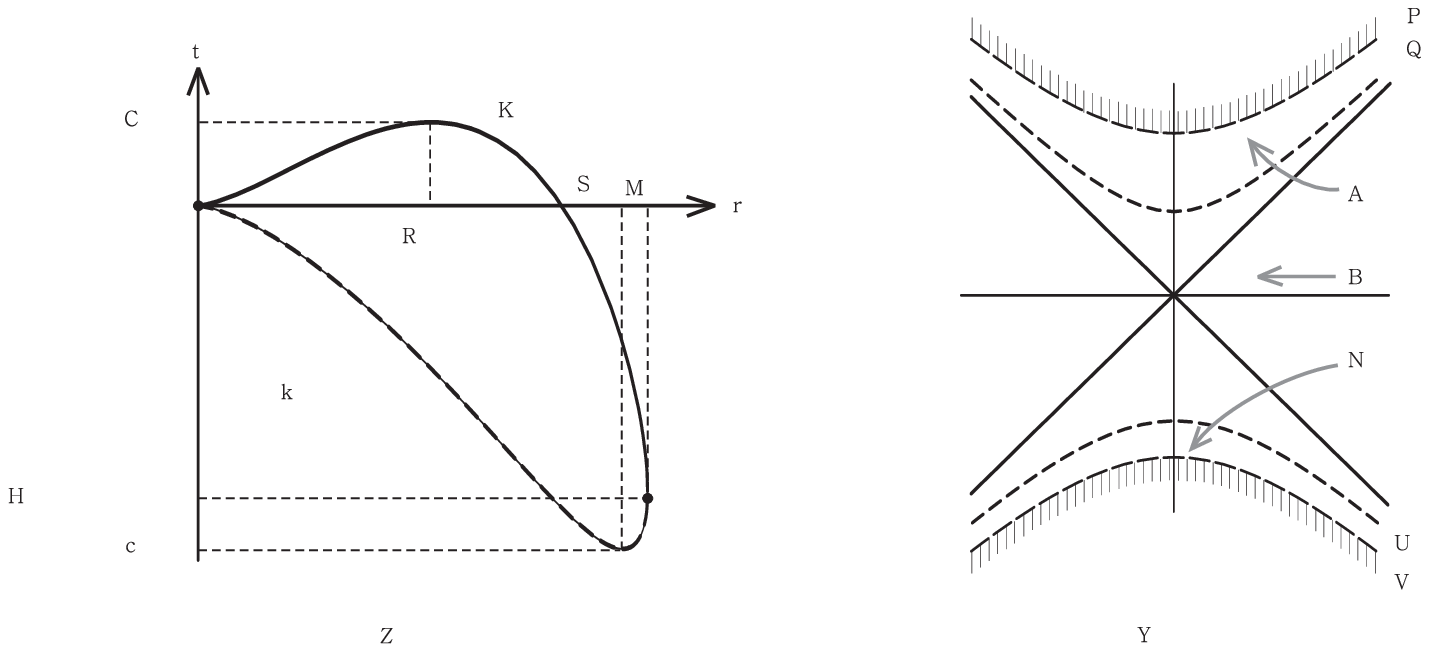}
\caption{(a) Graphs of $k_H(r)$ and $\tilde{k}_H(r)$.
(b) {\small SS-CMC} hypersurfaces $r=r_H$ and $r=R_H$,
and they divide the Kruskal extension into three regions.} \label{DomainofCMC}
\end{figure}

Now we summarize results of the initial value problem of the {\small SS-CMC} equation in the Kruskal extension.
Given $H\in\mb{R}$, for $(T_0,X_0)$ in the middle region as Figure~\ref{IVPcaseI}, and for $c\in\mb{R}$,
we can find an {\small SS-CMC} hypersurface $\Sa_{H,c}$ passing through $(T_0,X_0)$.
There are three values $c=c_H, c=-8M^3H$, and $c=C_H$ such that the value $c$ in different interval
determines different behavior of the {\small SS-CMC} hypersurface $\Sa_{H,c}$, which is also illustrated in Figure~\ref{IVPcaseI}.
Remark that for $c\in(c_H,C_H)$, the solution of $c=k_H^+(r)$ or $c=\tilde{k}_H^-(r)$, which is denoted by $r=r_{H,c}$,
is called ``throat" of the {\small SS-CMC} hypersurface $\Sa_{H,c}$.
The throat indicates that $\Sa_{H,c}$ is not defined in $r\in(0,r_{H,c})$ in the Schwarzschild interior.
Notice that each $\Sa_{H,c}$ is diffeomorphic to $I\ts\mb{S}^2$, where $I\sbt\mb{R}$,
so the throat $r=r_{H,c}$ is the smallest radius of the Schwarzschild coordinates sphere in $\Sa_{H,c}$.
Furthermore, every {\small SS-CMC} hypersurface $\Sa_{H,c,\bar{c}}, c\in(c_H,C_H)$ is symmetric with respect to its throat in the Schwarzschild coordinates.
See the discussion in \cite[Proposition 2.6]{LL1}.

\begin{figure}[h]
\psfrag{A}{\tiny$(T_0,X_0)$}
\psfrag{B}{\tiny$(T_0,-X_0)$}
\psfrag{a}{\footnotesize$c<c_H$}
\psfrag{b}{\footnotesize$c=c_H$}
\psfrag{c}{\hspace*{-19mm}\footnotesize$c_H<c<-8M^3H$}
\psfrag{d}{\hspace*{-10mm}\footnotesize$c=-8M^3H$}
\psfrag{e}{\hspace*{-20mm}\footnotesize$-8M^3H<c<C_H$}
\psfrag{f}{\hspace*{-2mm}\footnotesize$c=C_H$}
\psfrag{g}{\hspace*{-2mm}\footnotesize$c>C_H$}
\psfrag{r}{$r$}
\psfrag{t}{$t$}
\psfrag{k}{$k_H^+(r)$}
\psfrag{K}{$\tilde{k}_H^-(r)$}
\psfrag{R}{\footnotesize$R_H$}
\psfrag{S}{\footnotesize$r_H$}
\psfrag{M}{\footnotesize$2M$}
\psfrag{H}{\footnotesize$-8M^3H$}
\psfrag{C}{\footnotesize$C_H$}
\psfrag{D}{\footnotesize$c_H$}
\psfrag{l}{\hspace*{-1mm}$L(r)=c$}
\includegraphics[height=50mm,width=140mm]{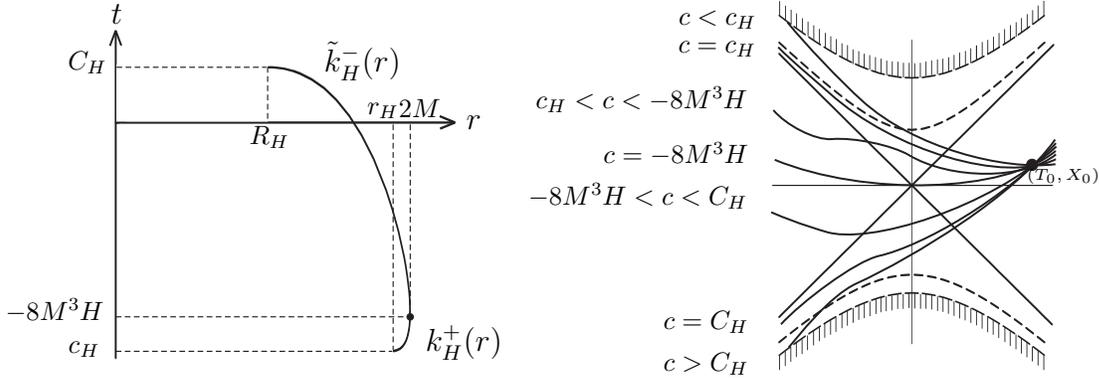}
\caption{{\small SS-CMC} initial value problem when $(T_0, X_0)$ is in the middle region.} \label{IVPcaseI}
\end{figure}

For $(T_0,X_0)$ in the top region (or the bottom region), as Figure~\ref{IVPcaseII},
denote $(t_0,r_0)$ by its Schwarzschild coordinates,
then for $c\leq k_H^-(r_0)$ (or $c\geq\tilde{k}_H^+(r_0)$), we can find an {\small SS-CMC} hypersurface $\Sa_{H,c}$ with $f'(r_0)<0$ (or $f'(r_0)>0$)
in the Schwarzschild interior passing through $(T_0,X_0)$.
There is also an {\small SS-CMC} hypersurface $\Sa_{H,c}$ with $f'(r_0)>0$ (or $f'(r_0)<0$) in the Schwarzschild interior passing through $(T_0,X_0)$,
but it is not illustrated in Figure~\ref{IVPcaseII}.

\begin{figure}[h]
\centering
\psfrag{A}{\tiny$(T_0,X_0)$}
\psfrag{B}{\tiny$(T_0,-X_0)$}
\psfrag{c}{\footnotesize$c<c_H$}
\psfrag{b}{\footnotesize$c=c_H$}
\psfrag{a}{\hspace*{-7mm}\footnotesize$c_H<c<0$}
\psfrag{g}{\hspace*{-8.5mm}\footnotesize$0<c<C_H$}
\psfrag{f}{\hspace*{-2mm}\footnotesize$c=C_H$}
\psfrag{e}{\hspace*{-2mm}\footnotesize$c>C_H$}
\psfrag{r}{$r$}
\psfrag{t}{$t$}
\psfrag{k}{$k_H^-(r)$}
\psfrag{K}{$\tilde{k}_H^+(r)$}
\psfrag{R}{\footnotesize$R_H$}
\psfrag{S}{\footnotesize$r_H$}
\psfrag{M}{\footnotesize$2M$}
\psfrag{H}{\footnotesize$-8M^3H$}
\psfrag{C}{\footnotesize$C_H$}
\psfrag{D}{\footnotesize$c_H$}
\psfrag{l}{\hspace*{-1mm}$L(r)=c$}
\includegraphics[height=50mm,width=134mm]{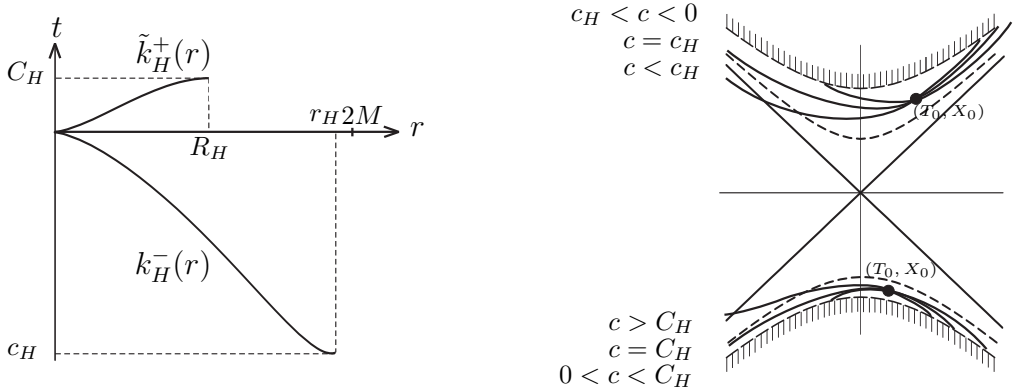}
\caption{{\small SS-CMC} initial value problem when $(T_0, X_0)$ is in the top or bottom region.} \label{IVPcaseII}
\end{figure}

\newpage
\section{The Lorentzian distance function}
We will introduce the Lorentzian distance function from a fixed achronal spacelike hypersurface in this section,
and then derive a formula of the Laplacian of the Lorentzian distance function
for further analysis on the Dirichlet problem of the {\small SS-CMC} equation in section~\ref{section4}.
Here we focus on the spacetime case.
Remark that results in this section hold in general $n$-dimensional Lorentzian manifold.
Most of results in this section can also be found in papers \cite{AHP, EGK} and in their references.

Let $N^3$ be an achronal spacelike hypersurface in a spacetime $(L^4,\lae\ct,\ct\rae)$.
Given $p, q\in L$, define the {\it Lorentzian distance function} $d(p,q):L\ts L\to[0,\iy]$ as follows:
\begin{itemize}
\item[(a)] If $q\in J^+(p)$, then $d(p,q)$ is the supremum of the Lorentzian lengths of all the future-directed causal curve from $p$ to $q$.
\item[(b)] If $q\not\in J^+(p)$, then $d(p,q)=0$.
\end{itemize}
Remark that the Lorentzian distance function is {\it not} symmetric,
that is, $d(p,q)\neq d(q,p)$.
For $q\in L$, we can further define the {\it Lorentzian distance function $d_N:L\to[0,\iy]$ with respect to $N$} as $d_N(q)=\sup\ls_{p\in N}d(p,q)$.

In general, the Lorentzian distance function $d_N$ is {\it not} smooth for every point in $L$,
but it is smooth in a sufficiently near chronological future of $N$.
To state this result, let $\nu$ be the future-directed unit timelike normal vector field of $N$.
We define a function $s_N:N\to[0,\iy]$ by $s_N(p)=\sup\{t\geq 0: d_N(\ga(t))=t\}$,
where $\ga(t)$ is the future inextensible geodesic such that $\ga(0)=p$ and $\ga'(0)=\nu$.
Denote $\tilde{\ml{I}}^+(N)=\{t\nu:\mx{ for all } p\in N \mx{ and } 0<t<s_N(p)\}$,
and consider the set
\begin{align*}
\ml{I}^+(N)=\exp_N(\mm{int}(\tilde{\ml{I}}^+(N)))\sbt I^+(N),
\end{align*}
where $\exp_N$ is the exponential map with respect to $N$.

\begin{lemma}{\rm \cite[Lemma 5.1]{AHP}, \cite[Proposition 3.6]{EGK}}
If $\ml{I}^+(N)\neq\emptyset$, then $d_N$ is smooth on $\ml{I}^+(N)$
and its gradient $\overline{\na}d_N$ is a past-directed unit
timelike vector field on $\ml{I}^+(N)$. That is,
$\overline{\na}d_N=-\nu$.
\end{lemma}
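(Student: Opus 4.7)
The plan is to identify $d_N$ on $\ml{I}^+(N)$ with a smooth ``arc-length to $N$'' function coming from the normal exponential map, and then to compute its gradient via a Gauss-lemma type calculation.

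First I would show that $\exp_N$ restricts to a diffeomorphism from $\mm{int}(\tilde{\ml{I}}^+(N))$ onto $\ml{I}^+(N)$. The domain is open by construction. Since the future-directed normal geodesic $\ga(t)=\exp_N(t\nu_p)$ satisfies $d_N(\ga(t))=t$ for $0<t<s_N(p)$, a standard Lorentzian index-form argument rules out focal points of $N$ along $\ga$ on this interval (a focal point before $s_N(p)$ would force $\ga$ to cease being length-maximizing there). Hence the differential of $\exp_N$ is non-degenerate on $\mm{int}(\tilde{\ml{I}}^+(N))$, and injectivity on this set follows from the same maximization property applied to pairs of normal geodesics reaching a common point.

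Next, define the smooth function $\tau:\ml{I}^+(N)\to(0,\iy)$ by $\tau(\exp_N(t\nu_p))=t$. For $q=\exp_N(t_0\nu_p)\in\ml{I}^+(N)$ the normal geodesic from $p$ to $q$ has Lorentzian length $t_0$, giving $d_N(q)\geq t_0$; the condition $t_0<s_N(p)$ upgrades this to $d_N(q)=t_0=\tau(q)$. Hence $d_N\equiv \tau$ on $\ml{I}^+(N)$, which yields the asserted smoothness. For the gradient at $q=\exp_N(t_0\nu_p)$, I would consider a Jacobi field $J$ along $\ga$ arising from a variation $s\mapsto \exp_N(t\nu_{\sa(s)})$ with $\sa(0)=p$ and $\sa'(0)=w\in T_pN$; then $J(0)=w\bt\nu_p$ and $(\na_{\ga'}J)(0)=\na_w\nu\bt\nu_p$ (because $\lae\nu,\nu\rae\ev -1$ on $N$), so $\lae\ga',J\rae\ev 0$ along $\ga$. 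This is the Lorentzian Gauss lemma: the level sets of $\tau$ are Lorentz-orthogonal to $\ga'$. Consequently $\oe{\na}\tau$ is parallel to $\ga'(t_0)$, and the normalization $\ga'(t_0)(\tau)=1$ together with $\lae\ga'(t_0),\ga'(t_0)\rae=-1$ forces $\oe{\na}d_N(q)=-\ga'(t_0)$, which is $-\nu$ after extending $\nu$ along each normal geodesic.

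The main obstacle is the no-focal-point input: establishing that $\exp_N$ has non-degenerate differential precisely on $\mm{int}(\tilde{\ml{I}}^+(N))$ requires the Lorentzian second variation formula adapted to a spacelike-hypersurface initial condition, plus a careful argument that the first focal point of $N$ along a timelike normal geodesic is exactly where maximization fails. This causal-character bookkeeping, together with the corresponding Gauss-lemma computation above, is the part that genuinely uses the interplay between the Lorentz signature and the geometry of $N$; the smoothness and gradient identities then follow essentially by the same mechanism as in Riemannian tubular-neighborhood theory.
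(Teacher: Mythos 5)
Your argument is sound: it is the standard proof of this lemma, namely showing that $\exp_N$ is a diffeomorphism from $\mm{int}(\tilde{\ml{I}}^+(N))$ onto $\ml{I}^+(N)$ (no focal points before the cut parameter $s_N$, plus injectivity via the broken-geodesic/maximization argument), identifying $d_N$ with the normal geodesic parameter there, and computing the gradient by a Lorentzian Gauss lemma with the correct sign $\oe{\na}d_N=-\ga'=-\nu$. The paper itself gives no proof of this lemma --- it is quoted from \cite{AHP} and \cite{EGK} --- and your outline is essentially the argument found in those references, with the two steps you flag as the main obstacles (absence of focal points on $(0,s_N(p))$ and injectivity of the normal exponential) being exactly the substantive, but standard, inputs.
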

The analysis on the Lorentzian distance function is related to the
Jacobi field, and we summarize the results here. Denote
$\overline{\na}$ the connection of the spacetime $L$ compatible with
the metric $\lae\ct,\ct\rae$, and $\na$ the connection of $N$
induced from $L$. Define $A_N$ as the {\it shape operator} of $N$
with respect to $L$ by $A_N(X)=-\na_X\nu$ for future-directed unit
timelike normal vector field $\nu$ of $N$ and for tangent vector
field $X$ of $N$. Given $p\in N$, let $\ga(t)$ be the
future-directed unit timelike geodesic with $\ga(0)=p$ and
$\ga'(0)=\nu$. We say a vector field $J(t)$ on the geodesic $\ga(t)$
is a {\it normal $N$-Jacobi vector field} if
$J''(t)+R(J(t),\ga'(t),\ga'(t))=0$, $J(0)\in T_pN$, and
$J'(0)=-A_N(J(0))$.

When we write a normal $N$-Jacobi field $J(t)$ in term of a parallel orthonormal frame,
by the existence and uniqueness of the ordinary differential system, for $v\in T_pN$,
there exists a unique $N$-Jacobi field $J(t)$ along $\ga(t)$ satisfies $J(0)=v$,
so the set of all normal $N$-Jacobi fields $J(t)$ along $\ga(t)$ is of dimension $n$.

The point $q\in L$ is called a {\it focal point} of $N$ if there exists a geodesic $\ga:[0,l]\to L$ with
$\ga(0)=p\in N, \ga'(0)\in(T_pN)^\bt, \ga(l)=q\in L$, and a nonzero normal $N$-Jacobi field $J(t)$ along $\ga(t)$ such that $J(l)=0$.

Similar to the Riemannian case, we can compute the Hessian of the
Lorentzian distance function $\overline{\mm{Hess}}(d_N)$ in terms of
the normal $N$-Jacobi vector field $J$ and the curvature tensor $R$
of the spacetime $L$.
\begin{prop}{\rm\cite[Proposition 3.7]{EGK}}
Let $N$ be an achronal spacelike hypersurface in a spacetime $L$. If
$\ml{I}^+(N)\neq\emptyset, q\in\ml{I}^+(N)$ and $Y\in T_qL$ is a
nonzero vector orthogonal to $\overline{\na}d_N(q)$, then
\begin{align*}
\overline{\mm{Hess}}(d_N(q))(Y,Y)
=&-\int_0^l(\lae J'(t),J'(t)\rae-\lae R(J(t),\ga'(t))\ga'(t),J(t)\rae)\,\mm{d}t\\
&+\lae A_N(J(0)),J(0)\rae,
\end{align*}
where $\ga(t):[0,l]\to\ml{I}^+(N)$, $\ga(0)=p, \ga'(0)\in(T_pN)^\bt$ and $\ga(l)=q$ is the future-directed unit timelike geodesic,
$J(t)$ is the unique normal $N$-Jacobi field along $\ga(t)$ with $J(l)=Y$, and $J'(0)=-A_N(J(0))$.
\end{prop}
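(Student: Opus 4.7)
The plan is to recognize $\oe{\mm{Hess}}(d_N)(Y,Y)$ as the second variation of Lorentzian arc length for a one-parameter family of maximizing $N$-orthogonal unit timelike geodesics terminating near $q$, and then reduce that second variation to the stated Jacobi/curvature integral plus a boundary contribution from the shape operator of $N$.

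First, I would fix an $L$-geodesic $\aa:(-\vn,\vn)\to L$ with $\aa(0)=q$ and $\aa'(0)=Y$, so that $\oe{\mm{Hess}}(d_N)(Y,Y)=(d_N\cc\aa)''(0)$. Since $q\in\ml{I}^+(N)$, the exponential map $\exp_N$ is a local diffeomorphism near the preimage of $q$, so for $|s|$ small we may write $\aa(s)=\exp_N(p_s,l(s))$ uniquely, with $p_s\in N$ smooth in $s$ and $l(s)=d_N(\aa(s))$. This yields a smooth variation $\Ga(s,t):=\exp_{p_s}(t\nu(p_s))$ of future unit timelike $N$-normal geodesics with $\Ga(0,\ct)=\ga$, $\Ga(s,0)=p_s$, and $\Ga(s,l(s))=\aa(s)$.

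Next I would verify that the variation field $V(t):=\pl_s\Ga(s,t)|_{s=0}$ is a normal $N$-Jacobi field along $\ga$ with $V(l)=Y$. The Jacobi equation is automatic from each $\Ga(s,\ct)$ being a geodesic. The initial conditions come from $\Ga(s,0)=p_s\in N$ and $\pl_t\Ga(s,0)=\nu(p_s)$: $V(0)=\pl_sp_s|_{s=0}\in T_pN$, and by the torsion-free identity,
\begin{align*}
V'(0)=\oe\na_{\pl_t}\pl_s\Ga(0,0)=\oe\na_{\pl_s}\pl_t\Ga(0,0)=\oe\na_{V(0)}\nu=-A_N(V(0)).
\end{align*}
Differentiating $\Ga(s,l(s))=\aa(s)$ in $s$ gives $V(l)+l'(0)\ga'(l)=Y$; since any normal $N$-Jacobi field stays orthogonal to $\ga'$ along $\ga$ and $Y\bt\oe\na d_N(q)=-\ga'(l)$, pairing with $\ga'(l)$ forces $l'(0)=0$, hence $V(l)=Y$.

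Then I would apply the second variation of Lorentzian arc length. Since $\ga_s:=\Ga(s,\ct)|_{[0,l(s)]}$ is unit timelike and realizes $d_N(\aa(s))$, we have $L(\ga_s)=l(s)$ and thus $\oe{\mm{Hess}}(d_N)(Y,Y)=l''(0)=L''(0)$. The standard timelike second-variation identity reads
\begin{align*}
L''(0)=-\int_0^l\lt(\lae V',V'\rae-\lae R(V,\ga')\ga',V\rae\rt)\mm{d}t+\lt[\lae\oe\na_{\pl_s}\pl_s\Ga,\ga'\rae\rt]_0^l.
\end{align*}
At the upper end $t=l$, the endpoint curve $s\mo\aa(s)$ is an $L$-geodesic (so $\oe\na_{\aa'}\aa'|_{s=0}=0$) and $l'(0)=0$, so the boundary contribution vanishes. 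At the lower end $t=0$, the endpoint curve $s\mo p_s$ lies in $N$, so its $L$-acceleration decomposes as a tangential $T_pN$-part plus $II_N(V(0),V(0))\nu$ with $II_N(X,X)=\lae A_N(X),X\rae$; pairing with $\ga'(0)=\nu$, using $\lae\nu,\nu\rae=-1$, and accounting for the sign from evaluating $[\ct]_0^l$ at the lower limit produces exactly $+\lae A_N(V(0)),V(0)\rae$. Identifying $V$ with $J$ then yields the claimed formula.

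The main obstacle is the careful sign bookkeeping inherent to the Lorentzian second-variation identity and its boundary terms: confirming that the $t=l$ contribution truly vanishes (using both $\aa$ being an $L$-geodesic and $l'(0)=0$ to kill what would otherwise be an $l''(0)$-proportional term on the wrong side), and that the $t=0$ contribution reassembles as $+\lae A_N(J(0)),J(0)\rae$ through the correct handling of the decomposition $\oe\na_{V(0)}V(0)=\na^N_{V(0)}V(0)+II_N(V(0),V(0))\nu$ together with the timelike sign $\lae\nu,\nu\rae=-1$. Once these signs are pinned down, the remaining identifications are routine.
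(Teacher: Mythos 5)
Your proposal follows a genuinely different route from the paper's. The paper does not invoke the second variation of arc length at all: it uses that $\gamma'(t)=-\overline{\nabla}d_N(\gamma(t))$ along the maximizing normal geodesic and that $J$ and $\gamma'$ arise from a geodesic variation, so that
\[
\overline{\mathrm{Hess}}(d_N)(Y,Y)=\langle J(l),\overline{\nabla}_{J(l)}\overline{\nabla}d_N\rangle
=-\langle J(l),\overline{\nabla}_{\gamma'(l)}J(l)\rangle=-\langle J(l),J'(l)\rangle,
\]
and then writes this as $-\int_0^l\frac{d}{dt}\langle J,J'\rangle\,dt-\langle J(0),J'(0)\rangle$, finishing with the Jacobi equation and $J'(0)=-A_N(J(0))$. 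This is shorter because the boundary analysis at $t=l$ is absorbed into the identity $\overline{\nabla}_{J}\overline{\nabla}d_N=-J'$, whereas your variational argument must build the family $\Gamma(s,t)$, prove $l'(0)=0$, and control a moving upper endpoint. Your verification that the variation field is the normal $N$-Jacobi field with $J(l)=Y$ (using $\langle J,\gamma'\rangle\equiv0$ and $Y\perp\overline{\nabla}d_N(q)$) is correct, and the overall strategy is a legitimate classical alternative.

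However, the sign bookkeeping that you yourself identify as the crux contains two errors that happen to cancel. First, in the timelike second-variation identity the boundary term carries the same overall factor $\varepsilon=\langle\gamma',\gamma'\rangle=-1$ as the integral: a direct computation of $\partial_s^2\sqrt{-\langle\partial_t\Gamma,\partial_t\Gamma\rangle}$ gives
\[
L''(0)=-\int_0^l\left(\langle V',V'\rangle-\langle R(V,\gamma')\gamma',V\rangle\right)dt
-\left[\langle\overline{\nabla}_{\partial_s}\partial_s\Gamma,\gamma'\rangle\right]_0^l,
\]
with a minus sign on the bracket, not the plus sign you wrote (a one-dimensional test variation $\Gamma(s,t)=(t\,h(s),s)$ in Minkowski space already detects this). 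Second, with a timelike unit normal the normal projection of a vector $W$ is $-\langle W,\nu\rangle\nu$, so the decomposition reads $\overline{\nabla}_{p_s'}p_s'=\nabla^N_{p_s'}p_s'-\langle A_N(p_s'),p_s'\rangle\,\nu$, and hence $\langle\overline{\nabla}_{p_s'}p_s',\gamma'(0)\rangle=+\langle A_N(V(0)),V(0)\rangle$, the opposite of what your stated convention $\mathrm{II}_N(X,X)=\langle A_N(X),X\rangle$ as the coefficient of $\nu$ would give. With both signs corrected, the lower boundary contribution of the (correct) formula is $+\langle\overline{\nabla}_{p_s'}p_s',\gamma'(0)\rangle=+\langle A_N(J(0)),J(0)\rangle$ and the conclusion stands; as written, your two errors merely compensate each other, which is exactly the sort of accident this proof needs to exclude.
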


\begin{proof}
Since $\ga(t)$ is realized by the Lorentzian distance function, we
have $\ga'(t)=-\overline{\na} d_N(\ga(t))$ for all $t\in(0,l]$. If
$J(t)$ is the unique normal $N$-Jacobi field along $\ga(t)$, then we
have
\begin{align*}
&\quad\ \overline{\mm{Hess}}(d_N)(Y,Y)
=\overline{\mm{Hess}}(d_N)(J(l),J(l)) \\
&=\lae J(l),\overline{\na}_{J(l)}\overline{\na} d_N\rae
=-\lae J(l),\overline{\na}_{\ga'(l)}J(l)\rae=-\lae J(l),J'(l)\rae \\
&=-\int_0^l\fc{\mm{d}}{\mm{d}t}\lae J(t),J'(t)\rae\,\mm{d}t-\lae J(0),J'(0)\rae \\
&=-\int_0^l\lt(\lae J'(t),J'(t)\rae-\lae R(J(t),\ga'(t))\ga'(t),J(t)\rae\rt)\mm{d}t+\lae A_N(J(0)),J(0)\rae.
\end{align*}
\end{proof}
From the formula of $\overline{\mm{Hess}}(d_N)$, it is natural to
define the {\it index form} of the geodesic $\ga(t)$ with respect to
$N$:
\begin{align*}
I_N(V,W)=-\int_0^l(\lae V',W'\rae-\lae R(V,\ga')\ga',W\rae)\,\mm{d}t+\lae A_N(V(0)),W(0)\rae.
\end{align*}
where $V$ and $W$ are vector fields along and orthogonal to $\ga$.
\newpage
The following theorem shows that the normal $N$-Jacobi vector field maximizes the index form $I_N$.
\begin{thm}{\rm\cite[Theorem 5.4]{AHP}}
Let $N$ be an achronal spacelike hypersurface in a spacetime $L$.
Given $p\in N$, let $\ga(t):[0,l]\to\ml{I}^+(N)$, $\ga(0)=p$ and $\ga'(0)\in T_pN$ be the future-directed unit timelike geodesic.
Suppose there are no focal points to $N$ along $\ga$.
Let $J$ be a normal $N$-Jacobi field along $\ga$.
Then for every vector field $X$ along and orthogonal to $\ga$ such that $X(l)=J(l)$,
it holds that
\begin{align*}
I_N(J,J)\geq I_N(X,X),
\end{align*}
with equality if and only if $J=X$.
\end{thm}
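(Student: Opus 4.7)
The approach I would take is the standard index-form comparison argument, adapted to the Lorentzian setting where the index form carries an overall minus sign and the shape-operator boundary contribution. The plan has three stages, and relies essentially on the no-focal-point hypothesis to keep a fixed Jacobi frame nondegenerate throughout $[0,l]$.

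First, I would build a canonical frame of Jacobi fields. Choose an orthonormal basis $\{e_1,\dots,e_n\}$ of $T_pN$ and let $J_i$ be the normal $N$-Jacobi field along $\gamma$ with $J_i(0)=e_i$ and $J_i'(0)=-A_N(e_i)$. The no-focal-point hypothesis guarantees that $\{J_i(t)\}_{i=1}^n$ is linearly independent for every $t\in[0,l]$, so it spans the spacelike subspace of $T_{\gamma(t)}L$ orthogonal to $\gamma'(t)$. A short calculation using the Jacobi equation $J_i''=-R(J_i,\gamma')\gamma'$ and the curvature symmetry shows that $\frac{\mathrm{d}}{\mathrm{d}t}(\langle J_i',J_j\rangle-\langle J_i,J_j'\rangle)=0$, while the value at $t=0$ vanishes because $A_N$ is self-adjoint; hence the symmetry $\langle J_i',J_j\rangle=\langle J_i,J_j'\rangle$ holds throughout $[0,l]$.

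Second, I would expand both $X$ and $J$ in this frame. Writing $X=\sum_i f^i(t)J_i(t)$ and $J=\sum_i c^iJ_i$ (the latter with constants, since normal $N$-Jacobi fields form an $n$-dimensional vector space), the hypothesis $X(l)=J(l)$ forces $f^i(l)=c^i$. A direct expansion, using the Jacobi equation to rewrite the curvature term and the symmetry $\langle J_i',J_j\rangle=\langle J_i,J_j'\rangle$ to recognize a total derivative, yields
\begin{align*}
\langle X',X'\rangle-\langle R(X,\gamma')\gamma',X\rangle=(f^i)'(f^j)'\langle J_i,J_j\rangle+\frac{\mathrm{d}}{\mathrm{d}t}\bigl(f^if^j\langle J_i,J_j'\rangle\bigr).
\end{align*}
Integrating over $[0,l]$ and using $\langle J_i(0),J_j'(0)\rangle=-\langle e_i,A_N(e_j)\rangle$, the $t=0$ boundary term exactly cancels the $\langle A_N(X(0)),X(0)\rangle$ contribution in $I_N(X,X)$, while the $t=l$ boundary term equals $\langle J(l),J'(l)\rangle$ and is therefore independent of $X$.

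This bookkeeping reduces the comparison to the identity
\begin{align*}
I_N(J,J)-I_N(X,X)=\int_0^l(f^i)'(f^j)'\langle J_i(t),J_j(t)\rangle\,\mathrm{d}t.
\end{align*}
Since $\{J_i(t)\}$ is a basis of a spacelike subspace, the Gram matrix $(\langle J_i(t),J_j(t)\rangle)$ is positive definite for every $t\in[0,l]$; hence the integrand is pointwise nonnegative, proving $I_N(J,J)\geq I_N(X,X)$. Equality forces $(f^i)'\equiv 0$ on $[0,l]$, and combined with $f^i(l)=c^i$ this gives $X=c^iJ_i=J$. I expect the main obstacle to be the frame construction: the no-focal-point hypothesis is used precisely to keep the $J_i(t)$ linearly independent, without which the expansion $X=f^iJ_i$ would fail and the Gram matrix would lose positive definiteness. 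Once the global frame is in place, the remainder is an algebraic rearrangement.
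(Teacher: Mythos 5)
Your proof is correct, but it takes the classical index-lemma route rather than the paper's. You expand $X$ in a global frame $\{J_i\}$ of normal $N$-Jacobi fields, establish the Lagrange symmetry $\langle J_i',J_j\rangle=\langle J_i,J_j'\rangle$ from the Jacobi equation and the self-adjointness of $A_N$, and reduce $I_N(J,J)-I_N(X,X)$ to the integral of the positive-definite Gram form applied to $((f^i)')$; the no-focal-point hypothesis enters exactly where it must, to keep $\{J_i(t)\}$ a basis of the spacelike complement of $\gamma'(t)$ on all of $[0,l]$. The paper instead works in a parallel orthonormal frame $\{e_i(t)\}$, writes $J=\sum_i J_i(t)e_i(t)$ with scalar components $J_i$, decomposes $X=\sum_i X_i(t)J_i(t)e_i(t)$ (each component of $X$ as a multiple of the corresponding component of $J$), and integrates $\langle X,B\rangle'$ by parts to isolate the nonpositive term $-\int_0^l\langle A,A\rangle\,\mathrm{d}t$. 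The two arguments are parallel in spirit --- both are an integration by parts leaving a sign-definite remainder --- but the decompositions are genuinely different, and yours is the more robust one: the paper's requires each scalar component $J_i(t)$ to be nonvanishing, which the no-focal-point hypothesis does not guarantee (it only gives $J(t)\neq 0$), and the cancellation of the curvature term against $\sum_j X_jJ_j''e_j$ is only clean when the curvature endomorphism respects the componentwise splitting; your Gram-matrix formulation sidesteps both issues and makes the equality case ($(f^i)'\equiv 0$, hence $X=J$) immediate. The only small point to add is the verification that each $J_i(t)$ remains orthogonal to $\gamma'(t)$ (the function $\langle J_i,\gamma'\rangle$ is affine in $t$ and vanishes to first order at $t=0$ by the initial conditions), which you need before asserting that $\{J_i(t)\}$ spans the spacelike orthogonal complement of $\gamma'(t)$ and that its Gram matrix is positive definite.
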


\begin{proof}
Let $\{e_i(t)\}_{i=1}^3$ be a parallel orthonormal frame along $\ga$ and orthogonal to $\ga'(t)$,
then we can write
\begin{align*}
J(t)=\sum_{i=1}^3J_i(t)e_i(t),\ \mx{ and }\ X(t)=\sum_{i=1}^3X_i(t)J_i(t)e_i(t),\ X_i(l)=1.
\end{align*}
It implies $X'(t)=\sum\ls_{i=1}^3 X_i'(t)J_i(t)e_{i}(t)+X_i(t)J_i'(t)e_i(t)=A(t)+B(t)$, and
\begin{align*}
I_N(X,X)
=-\int_0^l\lae A,A\rae+2\lae A,B\rae+\lae B,B\rae-\lae R(X,\ga')\ga',X\rae\,\mm{d}t+\lae A_N(X(0)),X(0)\rae.
\end{align*}
Since
\begin{align*}
\lae X(t),B(t)\rae'
&=\lae X'(t),B(t)\rae+\lae X(t),B'(t)\rae \\
&=\lae A+B,B\rae+\lt\lae \sum_{i=1}^3X_iJ_ie_i,\sum_{j=1}^3X_j'J_j'e_j+X_jJ_j''e_j\rt\rae \\
&=2\lae A,B\rae+\lae B,B\rae+\sum_{i,j=1}^3\lae X_iJ_ie_i, X_jJ_j''e_j\rae,
\end{align*}
we have
\begin{align*}
I_N(X,X)
&=-\int_0^l\lae A,A\rae+\lae X(t),B(t)\rae'-\sum_{j=1}^3\lae X, X_jJ_j''e_j\rae-\lae R(X,\ga')\ga',X\rae\,\mm{d}t\\
&\quad\,+\lae A_N(X(0)),X(0)\rae \\
&=-\int_0^l\lae A,A\rae\,\mm{d}t-\lae X(l),B(l)\rae+\lae X(0),B(0)\rae+\lae A_N(X(0)),X(0)\rae \\
&=-\int_0^l\lae A,A\rae\,\mm{d}t+I_N(J,J)+\lae A_N(X(0))+B(0),X(0)\rae\leq I_N(J,J).
\end{align*}
\end{proof}
Next, we will get the estimate an Laplacian of the Lorentzian distance function,
where the spacetime satisfies the timelike convergence condition.
For $q\in L$, we say $p\in N$ is the {\it orthogonal projection of $q$ on $N$}
if there is a timelike geodesic $\ga:[0,l]\to L$ such that $\ga(0)=p, \ga(l)=q$ and $\ga'(0)\in(T_pN)^\bt$.

\begin{lemma}{\rm\cite[Lemma 5.7]{AHP}} \label{LemmaLaplacian}
Let $L$ be a spacetime such that $Ric(Z,Z)\geq 0$ for every unit timelike vector $Z$,
and let $N$ be an achronal spacelike hypersurface such that $\ml{I}^+(N)\neq\emptyset$ and let $q\in\ml{I}^+(N)$.
Then
\begin{align*}
\overline{\Da}d_N(q)\geq-3H_N(p),
\end{align*}
where $\overline{\Da}$ stands for the Lorentzian Laplacian operator
on $L$, $H_N$ is the mean curvature of the hypersurface $N$ with
respect to $\nu$, and $p$ is the orthogonal projection of $q$ on
$N$.
\end{lemma}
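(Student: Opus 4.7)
The plan is to write $\overline{\Da}d_N(q)$ as a signed trace of $\overline{\mm{Hess}}(d_N)$ on a suitable orthonormal frame, then combine the Hessian formula of the preceding proposition with the $I_N$-maximization theorem to bound each diagonal entry.

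First I would set up the geodesic structure. Let $\ga:[0,l]\to\ml{I}^+(N)$ be the future-directed unit timelike geodesic with $\ga(0)=p$, $\ga'(0)=\nu$, $\ga(l)=q$; the hypothesis $q\in\ml{I}^+(N)=\exp_N(\mm{int}(\tilde{\ml{I}}^+(N)))$ ensures $l<s_N(p)$, so no focal point of $N$ occurs along $\ga$. Pick a parallel orthonormal frame $\{e_1(t),e_2(t),e_3(t)\}$ along $\ga$ with each $e_i(t)\bt\ga'(t)$; then $\{e_i(0)\}$ is an orthonormal basis of $T_pN$ and $\{e_i(l)\}$ is an orthonormal basis of $(\overline{\na}d_N(q))^\bt$. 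Because $d_N\cc\ga(t)=t$ and $\overline{\na}_{\ga'}\ga'=0$, one has $\overline{\mm{Hess}}(d_N)(\ga'(l),\ga'(l))=0$, so in Lorentzian signature the Laplacian reduces to the spatial sum
\begin{align*}
\overline{\Da}d_N(q)=\sum_{i=1}^3\overline{\mm{Hess}}(d_N)(e_i(l),e_i(l)).
\end{align*}

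For each $i$, let $J_i$ be the unique normal $N$-Jacobi field along $\ga$ with $J_i(l)=e_i(l)$. The preceding proposition yields $\overline{\mm{Hess}}(d_N)(e_i(l),e_i(l))=I_N(J_i,J_i)$, and the preceding theorem (applied with the parallel comparison field $e_i$, which satisfies $e_i(l)=J_i(l)$) gives $I_N(J_i,J_i)\geq I_N(e_i,e_i)$. Since $e_i'=0$,
\begin{align*}
I_N(e_i,e_i)=\int_0^l\lae R(e_i,\ga')\ga',e_i\rae\,\mm{d}t+\lae A_N(e_i(0)),e_i(0)\rae.
\end{align*}
Summing on $i$, the orthonormal basis $\{\ga',e_1,e_2,e_3\}$ of $T_{\ga(t)}L$ gives $\sum_i\lae R(e_i,\ga')\ga',e_i\rae=\mm{Ric}(\ga',\ga')\geq 0$ by the timelike convergence condition, while the paper's definitions $A_N(\ct)=-\na_\ct\nu$ and $H_N=\fc13 g^{ij}\lae\na_{e_i}\nu,e_j\rae$ immediately force $\sum_i\lae A_N(e_i(0)),e_i(0)\rae=-3H_N(p)$. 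Assembling,
\begin{align*}
\overline{\Da}d_N(q)=\sum_{i=1}^3 I_N(J_i,J_i)\geq\int_0^l\mm{Ric}(\ga'(t),\ga'(t))\,\mm{d}t-3H_N(p)\geq-3H_N(p).
\end{align*}

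The step I would treat most carefully is the sign bookkeeping: the conventions $A_N=-\na_\ct\nu$ and $H_N=\fc13 g^{ij}\lae\na_{e_i}\nu,e_j\rae$ must be paired so that $\mm{tr}\,A_N|_{T_pN}=-3H_N(p)$ comes out with the correct sign, and the $\ga'$-component of the trace defining the Lorentzian Laplacian carries a $(-1)$ that must conspire with $\overline{\mm{Hess}}(d_N)(\ga',\ga')=0$ so no inequality is flipped. Once those signs are correctly handled, everything else is a direct assembly of the Hessian formula and the index-form maximization stated just above, and is the Lorentzian analogue of the standard Riemannian Hessian-comparison argument.
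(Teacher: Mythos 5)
Your proposal is correct and follows essentially the same route as the paper: reduce $\overline{\Da}d_N(q)$ to the spatial trace of $\overline{\mm{Hess}}(d_N)$, identify each diagonal entry with $I_N(J_i,J_i)$ via the Hessian formula, compare against the parallel frame using the index-form maximization theorem, and conclude with $\mm{Ric}(\ga',\ga')\geq 0$ and $\mm{tr}\,A_N|_{T_pN}=-3H_N(p)$. Your extra remarks on the absence of focal points and the sign conventions are consistent with the paper's (implicit) use of the same facts.
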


\begin{proof}
Let $\ga:[0,l]\to L$ be the normal future-directed unit timelike
geodesic with $\ga(0)=p$ and $\ga'(0)\in(T_pN)^\bot$. Let
$\{e_i\}_{i=1}^3$ be orthonormal vectors in $T_qL$ orthogonal to
$\ga'(l)=-\overline{\na}d_N(q)$ and $J_i(t)$ be a normal $N$-Jacobi
vector field along $\ga$ with $J_i(l)=e_i$ and $J_i'(0)=-A_NJ_i(0)$.
For every $\{X_{i}(t)\}_{i=1}^{4}$ orthonormal frame of parallel
vector fields along $\ga$ such that $X_i(l)=e_i$ for $i=1,\lts,3$
and $X_{4}(t)=\ga'(t)$, we have
\begin{align*}
\overline{\Da}d_N(q)
&=\sum_{i=1}^3\overline{\mm{Hess}}(d_N(q))(e_i,e_i)
=\sum_{i=1}^3\overline{\mm{Hess}}(d_N(q))(J_i(l),J_i(l)) \\
&=\sum_{i=1}^3I_N(J_i(l),J_i(l))
\geq\sum_{i=1}^3I_N(X_i(l),X_i(l)) \\
&=-\int_0^l\lt(\sum_{i=1}^3\lae X_i',X_i'\rae-\sum_{i=1}^3\lae R(X_i,\ga')\ga',X_i\rae\rt)\mm{d}t+\sum_{i=1}^3\lae A_NX_i(0),X_i(0)\rae \\
&=\int_0^l\mm{Ric}(\ga'(t),\ga'(t))\,\mm{d}t-3H_N(p)\geq-3H_N(p).
\end{align*}
\end{proof}

If we restrict the Lorentzian distance function $d_N:L\to[0,+\iy]$
on a spacelike hypersurface $\Sa$, which denotes
$d_N|_{\Sa}:\Sa\to[0,+\iy]$, we will find relations between
$\overline{\mm{Hess}}(d_N)$ and $\mm{Hess}(d_N|_{\Sa})$. Since
$\overline{\na}d_N=(\overline{\na}d_N)^\top+(\overline{\na}d_N)^\bot=\na(d_N|_{\Sa})-\lae\overline{\na}{d_N},\nu\rae\nu$
along $\Sa$, where $\na(d_N|_{\Sa})$ is the gradient of $d_N|_{\Sa}$
on $\Sa$, we have
\begin{align*}
\lae\overline{\na}d_N,\overline{\na}d_N\rae
&=\lae \na (d_N|_{\Sa})-\lae\overline{\na}{d_N},\nu\rae\nu,\na(d_N|_{\Sa})-\lae\overline{\na}{d_N},\nu\rae\nu\rae \\
&=|\na(d_N|_{\Sa})|^2-|\lae\overline{\na}{d_N},\nu\rae|^2
=-1.
\end{align*}
Since $\lae\overline{\na}d_N,\nu\rae>0$, we have
$\lae\overline{\na}d_N,\nu\rae=\st{1+|\na(d_N|_{\Sa})|^2}$ and hence
$\overline{\na}d_N=\na(d_N|_{\Sa})-\st{1+|\na(d_N|_{\Sa})|^2}\,\nu$.
Next, for any vector field $X\in T\Sa$, we have
\begin{align*}
\overline{\na}_X\overline{\na}d_N
&=\lt(\overline{\na}_X\overline{\na}d_N\rt)^\top+\lt(\overline{\na}_X\overline{\na}d_N\rt)^\bot \\
&=\lt(\overline{\na}_X\lt(\na(d_N|_{\Sa})-\st{1+|\na(d_N|_{\Sa})|^2}\,\nu\rt)\rt)^\top\\
&\quad+\lt(\overline{\na}_X\lt(\na(d_N|_{\Sa})-\st{1+|\na(d_N|_{\Sa})|^2}\,\nu\rt)\rt)^\bot \\
&=\na_X\na(d_N|_{\Sa})+\st{1+|\na(d_N|_{\Sa})|^2}\,A_{\Sa}(X)\\
&\quad-\lae A_{\Sa}(X),\na(d_N|_{\Sa})\rae\nu-X\lt(\st{1+|\na(d_N|_{\Sa})|^2}\rt)\nu.
\end{align*}
Thus,
\begin{align*}
\mm{Hess}(d_N|_{\Sa})(X,X)
&=\lae \na_X\na(d_N|_{\Sa}),X\rae \\
&=\overline{\mm{Hess}}(d_N)(X,X)-\st{1+|\na(d_N|_{\Sa})|^2}\lae A_{\Sa}(X),X\rae.
\end{align*}
After taking trace, we get for all $q\in\Sa$,
\begin{align}
\Da_{\Sa}(d_N|_{\Sa})(q)=\overline{\Da}d_N(q)+\overline{\mm{Hess}}(d_N(q))(\nu,\nu)+3H_{\Sa}(q)\st{1+|\na(d_N|_{\Sa})(q)|^2}. \label{Laplacian1}
\end{align}
Remark the the above discussion can also be found in \cite[page 5091]{AHP}.
The result of Lemma~\ref{LemmaLaplacian} and the equation (\ref{Laplacian1}) will imply an important inequality of $\Da_{\Sa}(d_N|_{\Sa})$.
\begin{prop}{\rm\cite[Proposition 5.9]{AHP}} \label{PropLaplacian}
Let $L$ be a spacetime such that $Ric(Z,Z)\geq 0$ for every unit timelike vector $Z$,
and let $N$ be an achronal spacelike hypersurface with $\ml{I}^+(N)\neq\emptyset$.
Suppose that a spacelike hypersurface $\Sa$ satisfies $\Sa\sbt\ml{I}^+(N)$.
Denote $d_N|_{\Sa}$ by the Lorentzian distance function with respect to $N$ on $\Sa$. Then
\begin{align}
\Da_{\Sa}(d_N|_{\Sa})(q)\geq\overline{\mm{Hess}}(d_N(q))(\nu,\nu)+3H_{\Sa}(q)\st{1+|\na(d_N|_{\Sa})(q)|^2}-3H_N(p), \label{EstimateofLaplace}
\end{align}
where $\nu$ and $H_{\Sa}$ are the future-directed unit timelike vector field and the mean curvature of $\Sa$, respectively,
$H_N$ is the future mean curvature of $N$ along the orthogonal projection of $\Sa$ on $N$,
and $p$ is the orthogonal projection of $q$ on $N$.
\end{prop}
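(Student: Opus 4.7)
The plan is to combine the pointwise identity~(\ref{Laplacian1}) with the ambient Laplacian bound of Lemma~\ref{LemmaLaplacian}; no further work should be required. First, the hypothesis $\Sa\sbt\ml{I}^+(N)$ places every $q\in\Sa$ in the smooth locus of $d_N$, so Lemma~3.1 of the excerpt provides a past-directed unit timelike gradient $\oe{\na}d_N$ at $q$. The integral curve of $-\oe{\na}d_N$ from $q$ is the future-directed unit timelike geodesic $\ga:[0,d_N(q)]\to\ml{I}^+(N)$ realizing the distance from $N$ to $q$, and its basepoint $p=\ga(0)\in N$ satisfies $\ga'(0)\in(T_pN)^\bt$. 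Hence $p$ is exactly the orthogonal projection of $q$ on $N$ in the sense defined in the paragraph preceding Lemma~\ref{LemmaLaplacian}, which is the point at which $H_N$ is to be evaluated.

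Next, I would apply Lemma~\ref{LemmaLaplacian} at the point $q\in\ml{I}^+(N)$. The timelike convergence condition $\mm{Ric}(Z,Z)\geq 0$ assumed for $L$ is precisely the input of that lemma, and it yields
\begin{align*}
\oe{\Da}d_N(q)\geq-3H_N(p),
\end{align*}
where $p$ is the orthogonal projection identified above. This is the only ingredient that is not already an algebraic consequence of pointwise definitions.

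Finally, I would insert this inequality into the identity~(\ref{Laplacian1}). The terms $\oe{\mm{Hess}}(d_N(q))(\nu,\nu)$ and $3H_{\Sa}(q)\st{1+|\na(d_N|_{\Sa})(q)|^2}$ on the right-hand side of~(\ref{Laplacian1}) carry over unchanged; replacing the $\oe{\Da}d_N(q)$ term by its lower bound $-3H_N(p)$ gives exactly the desired estimate~(\ref{EstimateofLaplace}). There is essentially no obstacle: the content of the proposition is the lifting of the ambient estimate of Lemma~\ref{LemmaLaplacian} to an intrinsic estimate on $\Sa$, and the lifting is accomplished by the decomposition formula~(\ref{Laplacian1}). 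The only subtlety worth double-checking is the matching of the foot $p$ in Lemma~\ref{LemmaLaplacian} with the projection $p$ appearing in the proposition, which is handled by the regularity assertion $\oe{\na}d_N(q)=-\nu_{\ga}(q)$ at smooth points.
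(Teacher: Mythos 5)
Your proposal is correct and is essentially identical to the paper's own proof: substitute the lower bound $\overline{\Da}d_N(q)\geq-3H_N(p)$ from Lemma~\ref{LemmaLaplacian} into the decomposition (\ref{Laplacian1}) and rearrange. The extra care you take in identifying the foot $p$ as the orthogonal projection via the normal geodesic is consistent with the setup preceding Lemma~\ref{LemmaLaplacian} and introduces no issues.
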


\begin{proof}
Lemma~\ref{LemmaLaplacian} and equation (\ref{Laplacian1}) imply
\begin{align*}
\overline{\Da}d_N(q)
&=\Da_{\Sa}(d_N|_{\Sa})(q)-\overline{\mm{Hess}}(d_N(q))(\nu,\nu)-3H_{\Sa}(q)\st{1+|\na (d_N|_{\Sa})(q)|^2} \\
&\geq-3H_N(p),
\end{align*}
so we have
\begin{align*}
\Da_{\Sa}(d_N|_{\Sa})(q)\geq\overline{\mm{Hess}}(d_N(q))(\nu,\nu)+3H_{\Sa}(q)\st{1+|\na (d_N|_{\Sa})(q)|^2}-3H_N(p).
\end{align*}
\end{proof}

Next theorem states that the longest timelike geodesic between two spacelike hypersurfaces will perpendicular to both hypersurfaces.
\begin{thm} \label{Thmofperpendicular}
Let $N$ and $\Sa$ be two submanifolds of $L$ such that $\Sa\sbt\ml{I}^+(N)$,
and let $\ga:[0,l]\to L$ be a future directed timelike geodesic such that $\ga(0)\in N, \ga(l)\in\Sa$ and
$\ga$ is the longest curve from $N$ to $\Sa$.
Then $\ga'(0)$ is perpendicular to $N$ and $\ga'(l)$ is perpendicular to $\Sa$.
\end{thm}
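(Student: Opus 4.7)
The strategy is a standard first-variation argument adapted to the Lorentzian setting. I want to build a smooth variation of $\ga$ through timelike curves whose endpoints slide along $N$ and $\Sa$, compute the first variation of the Lorentzian length, and use the maximality of $\ga$ together with the arbitrariness of the boundary variation vectors to force the two orthogonality conditions.

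\textbf{Step 1: Admissible variations.} Given arbitrary $v_0\in T_{\ga(0)}N$ and $v_l\in T_{\ga(l)}\Sa$, I would use local parametrizations of $N$ and $\Sa$ near $\ga(0)$ and $\ga(l)$ to produce smooth curves $\sa_0:(-\vn,\vn)\to N$ and $\sa_l:(-\vn,\vn)\to\Sa$ with $\sa_0(0)=\ga(0), \sa_l(0)=\ga(l), \sa_0'(0)=v_0, \sa_l'(0)=v_l$. Then define a smooth map $\aa:(-\vn,\vn)\ts[0,l]\to L$ with $\aa(s,0)=\sa_0(s), \aa(s,l)=\sa_l(s)$ and $\aa(0,t)=\ga(t)$, for instance by interpolating in a suitable chart. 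Since $\ga$ is future-directed timelike and timelikeness is an open condition, after shrinking $\vn$ each longitudinal curve $t\mapsto\aa(s,t)$ is a future-directed timelike curve from $N$ to $\Sa$, so its Lorentzian length $L(s)$ is bounded above by the length of $\ga$ by hypothesis.

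\textbf{Step 2: First variation.} Parametrize $\ga$ so that $\lae\ga',\ga'\rae=-1$. Let $V(t)=\pl_s\aa(0,t)$, so that $V(0)=v_0, V(l)=v_l$. A direct computation, using $\na_s\aa_t=\na_t\aa_s=\na_t V$ on the variation and then integrating by parts, yields
\begin{align*}
\fc{\mm{d}L}{\mm{d}s}\bigg|_{s=0}
=-\int_0^l\lae\na_tV,\ga'\rae\,\mm{d}t
=\lae V(0),\ga'(0)\rae-\lae V(l),\ga'(l)\rae+\int_0^l\lae V,\na_t\ga'\rae\,\mm{d}t.
\end{align*}
Because $\ga$ is a geodesic, $\na_t\ga'=0$, so the interior integral vanishes and only the two boundary terms remain.

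\textbf{Step 3: Conclude orthogonality.} Since $L(s)$ attains its maximum at $s=0$, we have $\fc{\mm{d}L}{\mm{d}s}|_{s=0}=0$ for every admissible variation. Choosing $v_0=0$ and letting $v_l$ range over all of $T_{\ga(l)}\Sa$ gives $\lae v_l,\ga'(l)\rae=0$ for every $v_l\in T_{\ga(l)}\Sa$, hence $\ga'(l)\bt T_{\ga(l)}\Sa$. The symmetric choice $v_l=0, v_0$ arbitrary in $T_{\ga(0)}N$ gives $\ga'(0)\bt T_{\ga(0)}N$.

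\textbf{Expected difficulty.} The analytic content is light, since the geodesic equation kills the bulk term of the first variation; the main technical point is the explicit construction in Step 1 ensuring that the one-parameter family really consists of timelike curves connecting $N$ to $\Sa$ (so that they are legitimate competitors for the supremum defining the longest timelike curve). Once that is set up, the two perpendicularity statements follow immediately from independence of the boundary variations.
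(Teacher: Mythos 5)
Your proposal is correct and follows essentially the same route as the paper: a first-variation-of-arc-length argument in which the geodesic equation kills the interior term and the endpoint variations along $N$ and $\Sa$ force the boundary terms to vanish. The only cosmetic difference is that the paper varies one endpoint at a time and derives a contradiction from a strictly positive first variation, whereas you use a two-endpoint variation and stationarity at the maximum; these are interchangeable.
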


\begin{proof}
Suppose that $\ga'(0)$ is not perpendicular to $N$.
Choose $v\in T_{\ga(0)}N$ such that $\lae\ga'(0),v\rae>0$,
and let $\aa(s)$ be a curve in $N$ starting at $\ga(0)$ such that $\aa'(0)=v$.
We construct a variation $\Phi:[0,t]\ts(-\vn,\vn)\to L$ of $\ga(t)$ such that
\begin{align*}
\Phi(t,0)=\ga(t),\quad\Phi(0,s)=\aa(s),\quad\mx{and}\quad\Phi(l,s)=\ga(l).
\end{align*}
Denote $\ga_s(t)=\Phi(t,s)$ the family of timelike curves.
Then the first variation formula (see \cite[Proposition 2, page 264]{OB}) implies
\begin{align*}
\lt.\fc{\mm{d}}{\mm{d}s}\,\mm{Length}(\ga_s)\rt|_{s=0}
&=\fc1l\lt(-\lae\ga'(t),v\rae\Big|_{t=0}^{t=l}+\int_0^l\lae v,\na_{\ga'(t)}\ga'(t)\rae\,\mm{d}t\rt) \\
&=\fc1{l}\lae\ga'(0),v\rae>0.
\end{align*}
Therefore, for small $s$, $\mm{Length}(\ga_s)>\mm{Length}(\ga)$,
which implies $\ga(t)$ is not longest curve from $N$ to $\Sa$, and hence $\ga'(0)$ must be perpendicular to $L$.

Similar argument will get $\ga'(l)$ is perpendicular to $N$.
\end{proof}

\section{Dirichlet problem for SS-CMC equation} \label{section4}
\subsection{Setting the SS-CMC Dirichlet problem} \label{SettingDP}
Let $\Sa:(T=F(X),X,\ta,\phi)$ be an {\small SS-CMC} hypersurface in the Kruskal extension.
In \cite{LL1}, we have computed the {\small SS-CMC} equation:
\begin{align}
F''(X)&+\mbox{e}^{-\frac{r}{2M}}\left(\frac{6M}{r^2}-\frac1r\right)(-F(X)+F'(X)X)(1-(F'(X))^2)\notag\\
&+\frac{12HM\mbox{e}^{-\frac{r}{4M}}}{\sqrt{r}}(1-(F'(X))^2)^{\frac32}=0, \label{CMCequation}
\end{align}
where the spacelike condition is equivalent to $1-(F'(X))^2>0$, and where $H$ is the constant mean curvature.
Remark that $r=r(T,X)=r(F(X),X)$ satisfies the equation (\ref{trans}), namely, $(r-2M)\,\mathrm{e}^{\frac{r}{2M}}=X^2-T^2=X^2-(F(X))^2$;
spherically symmetric condition means that the function $T=F(X)$ is independent of $\ta$ and $\phi$,
and $M$ is the mass of the Schwarzschild spacetime, which is a positive constant.

We can formulate the {\small SS-CMC} Dirichlet problem as follows:

\begin{DP}
Given $H\in\mb{R}$ and boundary data $(T_0,X_0,\ta,\phi)$, $(T_0,-X_0,\ta,\phi)$ in the Kruskal extension,
does there exist a unique hypersurface $\Sa:(T=F(X),X,\ta,\phi)$ satisfying the {\small SS-CMC} equation (\ref{CMCequation}),
the spacelike condition $1-(F'(X))^2>0$, and the boundary value condition $F(X_0)=F(-X_0)=T_0$?
\end{DP}

Since we consider {\small SS-CMC} hypersurfaces in this paper,
the following discussions we only write the $T$-$X$ coordinates $(T,X)$ instead of the full coordinates $(T,X,\theta,\phi)$ in convenience.

\subsection{Existence of the {\small SS-CMC} equation}
\begin{thm} \label{Dirichletexistence}
Dirichlet problem for the {\small SS-CMC} equation with symmetric boundary data {\rm(}$\star${\rm)} is solvable.
\end{thm}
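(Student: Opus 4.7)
The strategy is the shooting method, exploiting the $X \mapsto -X$ symmetry of equation (\ref{CMCequation}) and of the boundary data $(T_0, \pm X_0)$. Because the SS-CMC equation is invariant under the reflection $X \mapsto -X$, any solution that is reflection-symmetric across the $T$-axis automatically realizes both boundary values once it realizes one of them. Hence it suffices to exhibit an SS-CMC hypersurface passing through $(T_0, X_0)$ whose axis of symmetry in the Kruskal picture coincides with the $T$-axis $\{X=0\}$.

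First I would parametrize the family of SS-CMC hypersurfaces through $(T_0, X_0)$ by the constant of integration $c$, with the translation parameter $\bar{c}$ fixed by the requirement $F(X_0) = T_0$. By the initial value theory of \cite{LL1, LL2} summarized in Section~\ref{SSCMCinKruskal}, to each admissible $c \in \mathbb{R}$ there corresponds a well-defined spacelike hypersurface $\Sigma_{H,c}$ through $(T_0,X_0)$, whose qualitative type depends on how $c$ compares with the critical values $c_H$, $-8M^3 H$, $C_H$ (Figures~\ref{IVPcaseI}, \ref{IVPcaseII}). For $c \in (c_H, C_H)$, $\Sigma_{H,c}$ possesses a throat $r = r_{H,c}$ about which it is symmetric in Schwarzschild coordinates, and this symmetry descends to a reflection across a vertical line $X = X_\ast(c)$ in the Kruskal diagram; the remaining cases must be treated via the gluing recipes of item (D).

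Next I would define the shooting function $\Phi(c) := X_\ast(c)$ as the $X$-coordinate of this axis of symmetry (equivalently, the point at which $F' = 0$ along $\Sigma_{H,c}$). The Dirichlet problem is solved as soon as we find $c^\ast$ with $\Phi(c^\ast) = 0$. I would establish continuity of $\Phi$ in $c$ using continuous dependence on parameters of the integral representations in items (A)--(C), and then analyze the limits of $\Phi(c)$ as $c$ approaches the boundary values $c_H$, $-8M^3H$, $C_H$, and $\pm\infty$. The aim is to show $\Phi$ changes sign, so that the intermediate value theorem produces $c^\ast$. The resulting $\Sigma_{H,c^\ast}$ is then spacelike (inherited from the IVP), satisfies (\ref{CMCequation}) by construction, and realizes $F(-X_0) = T_0$ by symmetry.

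The principal obstacle will be the case analysis. The qualitative type of $\Sigma_{H,c}$ depends both on which region of the Kruskal extension contains $(T_0,X_0)$ (top, middle, or bottom) and on the relative position of $c$ among $c_H$, $-8M^3H$, $C_H$; each subcase demands its own verification that $\Phi$ is well-defined, continuous, and attains values of both signs. The most delicate points are the transitions where the throat of $\Sigma_{H,c}$ approaches $r = 2M$, or where the hypersurface changes the way it glues across $U = 0$ or $V = 0$. These degenerations must be controlled using the explicit asymptotic behaviour $\bar{f}'(r)$ and $\tilde{f}'(r)$ in (\ref{barfeqn}) and (\ref{tildefeqn}), which ensure that $\Sigma_{H,c}$ extends smoothly in Kruskal coordinates and that the location $X_\ast(c)$ varies continuously through the critical values of $c$.
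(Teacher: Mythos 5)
Your strategy is the same as the paper's: shoot on the integration constant $c$, track the axis of symmetry (the throat) of $\Sigma_{H,c}$ through $(T_0,X_0)$, and apply the intermediate value theorem to force that axis onto $\{X=0\}$; your condition $\Phi(c^\ast)=X_\ast(c^\ast)=0$ is exactly the paper's condition $V(c')/U(c')=-1$ on the Kruskal coordinates of the throat. The case analysis you anticipate (middle versus top/bottom region, and the subdivision of $c$ at $c_H$, $-8M^3H$, $C_H$) is also how the paper organizes the argument.

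However, as written your proposal stops at the point where the actual proof begins: you state that ``the aim is to show $\Phi$ changes sign'' and that the degenerations at $c=-8M^3H$ ``must be controlled,'' but you do not carry out either step, and these constitute essentially the entire content of the theorem. Concretely, the paper (i) computes the endpoint limits $\lim_{c\to c_H^+}V/U=0$ and $\lim_{c\to C_H^-}V/U=-\infty$, which give the sign change, and (ii) shows that the two one-sided limits of $V/U$ as $c\to(-8M^3H)^{\mp}$ agree with a common finite value $L_0$, so that the shooting function is continuous (in the extended sense) across the transition where the hypersurface switches from gluing into region {\tt I\!I} to gluing into region {\tt I\!I'}. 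Step (ii) requires the quantitative estimate $\int_{r_{H,c}}^{4M-r_{H,c}}\bar f'(x,c)\,\mathrm{d}x\le 8M(2M-r_{H,c})\to 0$, exploiting the precise $(x-r_{H,c})^{-1/2}$ singularity of $\bar f'$ at the throat, together with the fact that $f'(x,-8M^3H)$ is only of order $O((r-2M)^{-1/2})$ so the remaining integral converges. If $L_0=-1$ the solution is the borderline hypersurface $c'=-8M^3H$ passing through the bifurcate sphere, a case your dichotomy should state explicitly. To complete your argument you need to supply these limit computations (and the analogous, easier ones for boundary data in the top and bottom regions, where the relevant extremal radius is a maximum rather than a throat and the admissible $c$-interval is $(c_H,c_0)$ or $(c_0,C_H)$).
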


The idea to the proof of the existence is the shooting method.
Take boundary data $(T_0,X_0)$ and $(T_0, -X_0)$ in the middle region for example,
and see Figure~\ref{CMCDirichletExistence2}.
Consider the family of {\small SS-CMC} hypersurfaces $\Sa_{H,c}$ with $c\in(c_H,C_H)$ passing through $(T_0,X_0)$.
The family $\Sa_{H,c}$ is continuously varied with respect to the parameter $c$.
When we observe the position of the ``throat'' $(t_{H,c},r_{H,c})$ of each {\small SS-CMC} hypersurface $\Sa_{H,c}$,
if $c\to c_H$ or $c\to C_H$, then $r_{H,c}$ will tend to $r=r_H$ or $r=R_H$, respectively.
By symmetry and the Intermediate Value Theorem,
we know that there must be some $\Sa_{H,c'}, c'\in(c_H,C_H)$ passing through the other boundary data $(T_0,-X_0)$.

\begin{figure}[h]
\centering
\psfrag{A}{\tiny$(T_0,X_0)$}
\psfrag{B}{\tiny$(T_0,-X_0)$}
\psfrag{r}{$r=r_H$}
\psfrag{R}{$r=R_H$}
\psfrag{T}{$T$}
\psfrag{X}{$X$}
\psfrag{U}{$U$}
\psfrag{V}{$V$}
\includegraphics[height=50mm,width=41mm]{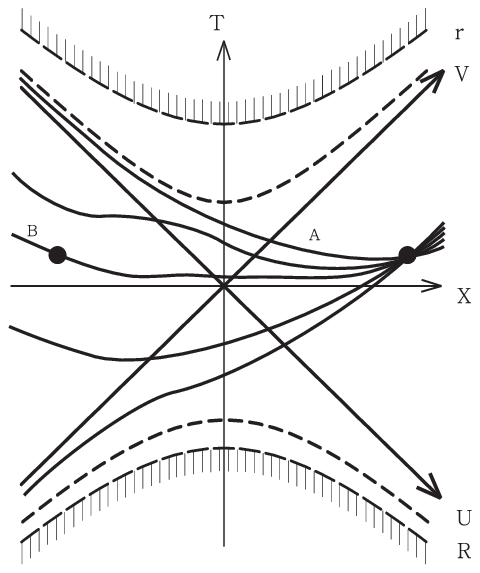}
\caption{Existence of the {\small SS-CMC} Dirichlet problem.} \label{CMCDirichletExistence2}
\end{figure}

\begin{proof}
(A) For $(T_0,X_0)$ and $(T_0,-X_0)$ in the region between $r=r_H$ and $r=R_H$,
denote $(t_0,r_0)$ by the standard Schwarzschild coordinates of $(T_0,X_0)$.
For $c\in(c_H,C_H)$, consider the following curve in the Kruskal extension in null coordinates:
\begin{align*}
\aa(c)=(U(c),V(c))=(U(t_{H,c},r_{H,c}), V(t_{H,c},r_{H,c})),
\end{align*}
where
\begin{align*}
U(c)&=\lt\{
\begin{array}{ll}
-\mm{e}^{-\fc1{4M}(t_0-2r_{H,c}-4M\ln|r_{H,c}-2M|+r_0+2M\ln|r_0-2M|+\int_{r_0}^{r_{H,c}}\bar{f}'(x,c)\,\mm{d}x)} & \mx{in region {\tt I\!I}} \\
+\mm{e}^{-\fc1{4M}(t_0-r_0-2M\ln|r_0-2M|+\int_{r_0}^{r_{H,c}}\tilde{f}'(x,c)\,\mm{d}x)} & \mx{in region {\tt I\!I'}},
\end{array}
\rt.\\
V(c)&=\lt\{
\begin{array}{ll}
+\mm{e}^{\fc1{4M}(t_0+r_0+2M\ln|r_0-2M|+\int_{r_0}^{r_{H,c}}\bar{f}'(x,c)\,\mm{d}x)} & \mx{in region {\tt I\!I}} \\
-\mm{e}^{\fc1{4M}(t_0+2r_{H,c}+4M\ln|r_{H,c}-2M|-r_0-2M\ln|r_0-2M|+\int_{r_0}^{r_{H,c}}\tilde{f}'(x,c)\,\mm{d}x)} & \mx{in region {\tt I\!I'}},
\end{array}
\rt.
\end{align*}
and where $\bar{f}'(x,c)$ and $\tilde{f}'(x,c)$ are (\ref{barfeqn}) and (\ref{tildefeqn}), respectively.
The curve $\aa(c)$ indicates the position of the throat $(t_{H,c},r_{H,c})$ of the {\small SS-CMC} hypersurface $\Sa_{H,c}$
which passes through $(T_0,X_0)$ in the Kruskal extension.
Remark that an {\small SS-CMC} hypersurface is symmetric to its throat in the standard Schwarzschild coordinates.

We will claim that $\fc{V(c')}{U(c')}=-1$ for some $c'\in(c_H,C_H)$, or $\lim\ls_{c\to c'=-8M^3H}\fc{V(c)}{U(c)}=-1$,
which means the throat of $\Sa_{H,c'}$ is located on the $T$-axis,
then the symmetry implies the hypersurface $\Sa_{H,c'}$ must pass through $(T_0,-X_0)$, so we get the existence of the Dirichlet problem ($\star$).

One subtle thing is that functions $\bar{f}'$ and $\tilde{f}'$ have different order behaviors when $c\to-8M^3H$,
so we need to take care of them by the following arguments.

\noindent\ue{If $c<-8M^3H$}, we know that the curve $\aa(c)$ lies in region {\tt I\!I}.
Denote
\begin{align*}
\hat{r}_{H,c}=2M+(2M-r_{H,c})=4M-r_{H,c}\ \mx{ and }\ \hat{t}_{H,c}=t_0+\int_{r_0}^{\hat{r}_{H,c}}f'(x,c)\,\mm{d}x.
\end{align*}
Relation (\ref{trans}) gives
\begin{align*}
\lt.\fc{V(c)}{U(c)}\rt|_{\aa(c)}
&=-\mm{e}^{\fc1{2M}t_{H,c}}
=-\lt(\mm{e}^{\fc1{2M}\hat{t}_{H,c}}\rt)\lt(\mm{e}^{\fc1{2M}(t_{H,c}-\hat{t}_{H,c})}\rt).
\end{align*}
When $c\to -8M^3H^-$, we have
\begin{align*}
\lim_{c\to-8M^3H^-}\mm{e}^{\fc1{2M}\hat{t}_{H,c}}
=\lim_{c\to-8M^3H^-}\mm{e}^{\fc1{2M}\lt(f(r_0)+\int_{r_0}^{\hat{r}_{H,c}}f'(x,c)\,\mm{d}x\rt)}
=\mm{e}^{\fc1{2M}\lt(f(r_0)+\int_{r_0}^{2M}f'(x,-8M^3H)\,\mm{d}x\rt)}.
\end{align*}
The limit exists because the function $f'(x,-8M^3H)$ is of order $O(r-2M)^{-\fc12}$.
Next, we investigate the other limit:
\begin{align*}
\lim_{c\to-8M^3H^-}\mm{e}^{\fc1{2M}(t_{H,c}-\hat{t}_{H,c})}
=\lim_{c\to-8M^3H^-}\mm{e}^{\fc1{2M}\lt(2(2M-r_{H,c})+\int_{r_{H,c}}^{4M-r_{H,c}}\bar{f}'(x,c)\,\mm{d}x\rt)}.
\end{align*}
Since
\begin{align*}
\bar{f}'(x,c)
&=\fc{x^4}{(x-r_{H,c})^{\fc12}P(x,c)},
\end{align*}
where $P(x,c)>0$ is a smooth function,
let $y=x-r_{H,c}$, we get
\begin{align*}
\int_{r_{H,c}}^{4M-r_{H,c}}\bar{f}'(x,c)\,\mm{d}x
&=\int_{0}^{2(2M-r_{H,c})}\fc{(y+r_{H,c})^4}{y^{\fc12}P(y+r_{H,c},c)}\,\mm{d}y
\leq\int_{0}^{2(2M-r_{H,c})}\fc{M}{y^{\fc12}}\,\mm{d}y \\
&=8M(2M-r_{H,c})\to 0 \mx{ as } c\to-8M^3H^-.
\end{align*}
Hence the limit
\begin{align*}
\lim_{c\to-8M^3H^-}\lt.\fc{V(c)}{U(c)}\rt|_{\aa(c)}
=-\mm{e}^{\fc1{2M}\lt(f(r_0)-\int_{2M}^{r_0}f'(x,-8M^3H)\,\mm{d}x\rt)}
\end{align*}
exists. We denote the limit by $L_0$.
On the other hand, from the limit behavior of {\small SS-CMC} hypersurfaces in paper \cite{LL1}, we know
\begin{align*}
\lim_{c\to c_H^+}\lt.\fc{V(c)}{U(c)}\rt|_{\aa(c)}=0.
\end{align*}

\noindent\ue{If $c>-8M^3H$}, we know that the curve lies in region {\tt I\!I'}, and we get
\begin{align*}
\fc{V(c)}{U(c)}
=-\mm{e}^{\fc1{2M}t_{H,c}}
=-\mm{e}^{\fc1{2M}\lt(t_0+r_{H,c}+2M\ln|r_{H,c}-2M|-r_0-2M|r_0-2M|+\int_{r_0}^{r_{H,c}}\tilde{f}'(x,c)\,\mm{d}x\rt)}.
\end{align*}
Similar discussion gives the result that
\begin{align*}
\lim_{c\to-8M^3H^+}\lt.\fc{V(c)}{U(c)}\rt|_{\aa(c)}
=-\mm{e}^{\fc1{2M}\lt(f(r_0)-\int_{2M}^{r_0}f'(x,-8M^3H)\,\mm{d}x\rt)}=L_0.
\end{align*}
Furthermore, from the limit behavior of {\small SS-CMC}  hypersurfaces in paper \cite{LL1}, we know
\begin{align*}
\lim_{c\to C_H^-}\lt.\fc{V(c)}{U(c)}\rt|_{\aa(c)}=-\iy.
\end{align*}
Therefore, by the Intermediate Value Theorem, we get
\begin{itemize}
\item[(1)] If $L_0<-1$, there exists $c'\in(c_H,-8M^3H)$ such that $\fc{V(c')}{U(c')}=-1$.
\item[(2)] If $L_0=-1$, then $c'=-8M^3H$ satisfies $\lim\ls_{c\to c'}\fc{V(c)}{U(c)}=-1$.
\item[(3)] If $L_0>-1$, there exists $c'\in(-8M^3H,C_H)$ such that $\fc{V(c')}{U(c')}=-1$.
\end{itemize}

\noindent (B) For $(T_0,X_0)$ and $(T_0,-X_0)$ in the region between $r=r_H$ and $r=0$,
denote $(t_0,r_0)$ by the standard Schwarzschild coordinates of $(T_0,X_0)$.
Let $c_0=-Hr_0^3-r_0^{\fc32}(2M-r_0)^{\fc12}.$
For $c\in(c_H,c_0)$,
the curve
\begin{align*}
\aa(c)
&=(U(c),V(c)) \\
&=\lt(
-\mm{e}^{-\fc1{4M}\lt(\int_{r_0}^{r_{H,c}}f'(x,c)\,\mm{d}x-r_0-2M\ln|r_0-2M|\rt)},
\mm{e}^{\fc1{4M}\lt(\int_{r_0}^{r_{H,c}}f'(x,c)\,\mm{d}x+r_0+2M\ln|r_0-2M|\rt)}\rt).
\end{align*}
will trace the position of the maximum radius of the Schwarzschild coordinates sphere of the {\small SS-CMC} hypersurface
$\Sa_{H,c}$ which passes through $(T_0,X_0)$.
Since $\lim\ls_{c\to c_0^-}\fc{V(c)}{U(c)}=-\mm{e}^{\fc1{2M}t_{H,c}}<-1$ and $\lim\ls_{c\to c_H^+}\fc{V(c)}{U(c)}=0$,
by the Intermediate Value Theorem, there exists $c'\in(c_H,c_0)$ such that $\fc{V(c')}{U(c')}=-1$.

\noindent (C) For $(T_0,X_0)$ and $(T_0,-X_0)$ in the region between $r=R_H$ and $r=0$,
denote $(t_0,r_0)$ by the standard Schwarzschild coordinates of $(T_0,X_0)$.
Let $c_0=-Hr_0^3+r_0^{\fc32}(2M-r_0)^{\fc12}$.
For $c\in(c_0,C_H)$,
the curve
\begin{align*}
\aa(c)&=(U(c),V(c))\\
&=\lt(\mm{e}^{-\fc1{4M}\lt(\int_{r_0}^{r_{H,c}}f'(x,c)\,\mm{d}x-r_0-2M\ln|r_0-2M|\rt)},
-\mm{e}^{\fc1{4M}\lt(\int_{r_0}^{r_{H,c}}f'(x,c)\,\mm{d}x+r_0+2M\ln|r_0-2M|\rt)}\rt).
\end{align*}
will trace the position of the maximum radius of the Schwarzschild coordinates sphere in the {\small SS-CMC} hypersurface $\Sa_{H,c}$
which passes through $(T_0,X_0)$.
Since $\lim\ls_{c\to c_0^+}\fc{V(c)}{U(c)}=-\mm{e}^{\fc1{2M}t_{H,c}}>-1$ and $\lim\ls_{c\to C_H^-}\fc{V(c)}{U(c)}=-\iy$,
by the Intermediate Value Theorem, there exists $c'\in(c_0,C_H)$ such that $\fc{V(c')}{U(c')}=-1$.
\end{proof}

It is easy to find that solutions of the {\small SS-CMC} equation with symmetric boundary data are symmetric about the $T$-axis in the Kruskal extension.
\begin{defn}
We say an {\small SS-CMC} hypersurface $\Sa:(T=F(X),X,\ta,\phi)$ in the Kruskal extension is {\it $T$-axisymmetric} if $F(-X)=F(X)$ for all $X$.
\end{defn}

\begin{thm} \label{Taxisymmetric}
All {\small SS-CMC} hypersurfaces satisfying the Dirichlet problem {\rm(}$\star${\rm)} are $T$-axisymmetric.
That is, $F(-X)=F(X)$ for all $X$.
\end{thm}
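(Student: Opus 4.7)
The plan is to use the reflection symmetry $X \mapsto -X$, which is a discrete isometry of the Kruskal extension (the metric (\ref{SchMetric2}) depends on $X$ only through $r$, and by (\ref{trans}) $r$ depends on $X^2 - T^2$), together with the classification of SS-CMC hypersurfaces recalled in Section \ref{SSCMCinKruskal}. Given any solution $F$ of $(\star)$, I would first define $\tilde F(X) := F(-X)$ on $[-X_0, X_0]$ and verify by direct substitution that $\tilde F$ is again a solution of $(\star)$ with the same mean curvature $H$. The verification is routine: $\tilde F'(X) = -F'(-X)$, $\tilde F''(X) = F''(-X)$, $r(\tilde F(X), X) = r(F(-X), -X)$, the combination $-\tilde F(X) + \tilde F'(X)\,X = -F(-X) - F'(-X)\,X$ agrees with $(-F(Y)+F'(Y)Y)|_{Y=-X}$, and $1 - (\tilde F'(X))^2 = 1 - (F'(-X))^2$. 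Each term of (\ref{CMCequation}) evaluated for $\tilde F$ at $X$ therefore equals the corresponding term for $F$ at $-X$, which vanishes by hypothesis. The spacelike condition and the boundary values $\tilde F(\pm X_0) = F(\mp X_0) = T_0$ transfer automatically.

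Next I would conclude $F = \tilde F$ by identifying the underlying hypersurfaces. The graphs $\Sigma_F$ and $\Sigma_{\tilde F}$ are SS-CMC hypersurfaces in the Kruskal extension with the same $H$, and $\Sigma_{\tilde F}$ is the image of $\Sigma_F$ under the Kruskal isometry $R:(T,X)\mapsto(T,-X)$. By the classification of Section \ref{SSCMCinKruskal}, any SS-CMC hypersurface is parametrized by $(H,c,\bar c)$, with $\bar c$ uniquely determined once $(H,c)$ and any one point on the hypersurface are prescribed. Since $R$ preserves the radius function, it preserves the throat radius $r_{H,c}$ and hence the integration constant $c$; and since both $\Sigma_F$ and $\Sigma_{\tilde F}$ pass through the single point $(T_0,X_0)$ (because $\tilde F(X_0) = F(-X_0) = T_0$), they share the same $\bar c$. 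Therefore $\Sigma_F = \Sigma_{\tilde F}$, i.e., $F(X) = F(-X)$ for all $X\in[-X_0,X_0]$.

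The main technical subtlety is verifying the invariance of $c$ under $R$. Because $R$ swaps adjacent Schwarzschild patches (regions {\tt I} $\leftrightarrow$ {\tt I'} and {\tt I$\!$I} $\leftrightarrow$ {\tt I$\!$I'}) and flips Schwarzschild time $t \mapsto -t$, one must check that the integral formulas (A)--(D) of Section \ref{SSCMCinKruskal} transform compatibly under this action, so that $c$ really is an intrinsic invariant of the hypersurface (its throat parameter) rather than a coordinate-dependent constant of integration. Morally this is clear from the fact that $R$ fixes $r$, but making it rigorous is the heart of the argument. Should this invariance prove delicate, an alternative route is to defer the conclusion to the uniqueness half of the main Theorem~\ref{existence}, whose proof uses the Lorentzian distance function machinery of Section~3 and yields $F = \tilde F$ at once.
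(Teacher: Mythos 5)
Your argument is essentially the paper's: the paper likewise passes to the $T$-axisymmetric reflection, observes that the reflection cannot change the integration constant $c$ because it preserves the throat radius (and the maximal Schwarzschild sphere radius), and concludes that the reflected hypersurface must coincide with the original, the only difference being that the paper phrases this as a contradiction rather than a direct identification via $(H,c,\bar c)$. One caveat: your proposed fallback of deferring to the uniqueness half of Theorem~\ref{existence} would be circular in this paper's logical order, since the proof of uniqueness (through Theorem~\ref{Continuity}) already presupposes that solutions of $(\star)$ are $T$-axisymmetric.
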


\begin{proof}
Suppose that an {\small SS-CMC} hypersurface $\Sa$ satisfying {\rm(}$\star${\rm)} is not $T$-axisymmetric.
We consider its $T$-axisymmetric reflection hypersurface, called $\tilde{\Sa}$.
Then $\Sa$ and $\tilde{\Sa}$ have different parameter $c$.
On the other hand, the reflection does not change the radius of the throat $r=r_{H,c}$
or the maximum radius of the Schwarzschild coordinates sphere, so $\Sa$ and $\tilde{\Sa}$ must have the same value $c$,
and it leads to the contradiction.
\end{proof}

\subsection{Uniqueness of the {\small SS-CMC} equation}
We will prove the uniqueness of the {\small SS-CMC} Dirichlet problem in this subsection.
Before that, we need to find more properties about the {\small SS-CMC} hypersurfaces.
Theorem~\ref{Dirichletexistence} and \ref{Taxisymmetric} imply that for fixed boundary data $(T_0, X_0)$ and $(T_0,-X_0)$, and for every $H\in\mb{R}$,
there exists a {\small TSS-CMC} hypersurface $\Sa_{H,c}$ satisfying the Dirichlet problem ($\star$).
Next theorem shows that these hypersurfaces $\Sa_{H,c}$ are continuously varied with respect to the mean curvature $H$.

\begin{thm} \label{Continuity}
For every $H\in\mb{R}$, denote $\Sa_{H,c(H)}:(T=F_H(X),X,\theta,\phi)$
an {\small SS-CMC} hypersurface satisfying the {\small SS-CMC} equation {\rm(\ref{CMCequation})},
the spacelike condition $1-(F_H'(X))^2>0$, and the boundary value condition $F_H(X_0)=F_H(-X_0)=T_0$.
Then the family $\Sa_{H,c(H)}$ {\rm(}the function $F_H(X)${\rm)} is continuously varied with respect to the mean curvature $H$.
\end{thm}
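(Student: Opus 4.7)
The plan is to combine two ingredients: continuous dependence of ODE solutions on parameters for the SS-CMC equation~(\ref{CMCequation}), and continuity of the shooting parameter $c(H)$ produced in Theorem~\ref{Dirichletexistence}. First I would observe that~(\ref{CMCequation}) is a second-order ODE whose coefficients are smooth functions of $F, F', X$, and $H$ (with $r = r(F,X)$ determined by~(\ref{trans})) on regions away from the coordinate singularity $r = 2M$, while behavior near $r = 2M$ is controlled by the asymptotic formulas for $\bar{f}'$ and $\tilde{f}'$ in~(\ref{barfeqn}) and (\ref{tildefeqn}), handled in $(U,V)$ coordinates as in the existence proof. Standard continuous-dependence theory then yields joint continuity of $F_{H,c,\bar c}(X)$ in $(H, c, \bar c, X)$ on compact subsets of the spacelike domain of each solution.

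Next I would characterize, for each $H$, the Dirichlet-compatible parameter $c(H)$ through the shooting equation
\[
\Psi(H, c) := \lt.\fc{V(c)}{U(c)}\rt|_{\aa(c)} = -1,
\]
i.e.\ the throat of $\Sa_{H,c}$ lies on the $T$-axis. The explicit formulas for $U(c), V(c)$ in Theorem~\ref{Dirichletexistence} show that $\Psi$ is continuous in $(H, c)$ on each of the subintervals $c \in (c_H, -8M^3H)$ and $c \in (-8M^3H, C_H)$ (and in the corresponding sub-cases of (B), (C)). Moreover the matched one-sided limits computed as $c \to -8M^3H^{\pm}$ produce a common finite value $L_0(H)$ depending continuously on $H$, so $\Psi$ extends continuously across this transition point.

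Then I would extract continuity of $c(H)$ by a sequential compactness argument: given any $H_n \to H_*$, the values $c(H_n)$ stay in a compact subinterval bounded away from the endpoints $c_H(H_n), C_H(H_n)$ (where $\Psi$ tends to $0$ or $-\iy$, respectively). Any subsequential limit $c^*$ satisfies $\Psi(H_*, c^*) = -1$ by continuity of $\Psi$ and therefore corresponds to an SS-CMC Dirichlet solution at $H_*$; invoking uniqueness (to be established in the remainder of this subsection), $c^* = c(H_*)$, so the whole sequence satisfies $c(H_n) \to c(H_*)$. Combined with the joint continuity of $F_{H,c}$, this gives $F_{H_n} \to F_{H_*}$ in $C^2$ on compact subsets, which is the desired continuous variation.

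The main obstacle I anticipate is handling transitions between the three geometric cases (A), (B), (C) of Theorem~\ref{Dirichletexistence}: as $H$ varies, the cylindrical hypersurfaces $r = r_H$ and $r = R_H$ move, so the boundary point $(T_0, X_0)$ may cross between regions and the formulas defining $\aa(c)$ and $\Psi$ change form. One must verify that these formulas agree on overlap and produce a genuinely continuous $c(H)$ across such case-changes. A minor secondary subtlety is the use of uniqueness in the argument above; if one prefers to avoid circularity, a direct implicit-function-theorem argument applied to $\Psi$ at $c(H)$ would work provided $\pl_c \Psi \neq 0$, which can be checked from the explicit formulas for $U(c), V(c)$.
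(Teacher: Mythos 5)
Your main route is circular in a way that matters here. You obtain continuity of $c(H)$ by a sequential compactness argument whose last step invokes uniqueness of the Dirichlet solution at $H_*$; but in this paper uniqueness (Theorem~\ref{uniqueness}) is proved \emph{after} and \emph{by means of} Theorem~\ref{Continuity}: the uniqueness argument needs the solution locus $\ba_1\cup\ba_2(T)$, $\ga_1(T)$, $\ga_2(T)$ to already be a continuous graph over the $T$-axis in order to extract an ``increasing part'' and produce the two hypersurfaces $N$ and $\Sa$ fed into the Lorentzian distance estimate. So uniqueness is not available to you. Your proposed repair --- apply the implicit function theorem to $\Psi(H,c)=V(c)/U(c)$ in the variable $c$, asserting that $\pl_c\Psi\neq 0$ ``can be checked from the explicit formulas'' --- is not a minor subtlety but essentially the whole difficulty: since $\Psi=-\mm{e}^{t_{H,c}/(2M)}$, the claim $\pl_c\Psi\neq0$ is monotonicity of the throat time in $c$, which together with the endpoint limits at $c_H$, $-8M^3H$, $C_H$ would already force the shooting parameter to be unique for each fixed $H$ --- precisely the statement the paper spends the Lorentzian-distance machinery of Section 3 to establish. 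Differentiating $t_{H,c}$ in $c$ produces a divergent boundary contribution at the moving throat plus an integral term of no evident sign, so this is an unverified and genuinely hard claim.

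The paper's proof sidesteps this by changing the independent variable: it parametrizes candidate solutions by the throat radius $r$, setting $c=c(H,r)=-Hr^3\mp r^{\fc32}(2M-r)^{\fc12}$, and writes the shooting condition as $\bar G(r,H)=0$ or $\tilde G(r,H)=0$. Differentiating in $H$ at fixed $r$ gives an integrand proportional to $\fc1{h}\,\pl_H l\,(1+l^2)^{-\fc32}$, which has a definite sign because $\pl_H l$ does; the implicit function theorem then shows the solution locus is locally a graph $H=H(r)$. Note this is a differently oriented (and weaker) conclusion than ``$c(H)$ is a single-valued continuous function of $H$'': it says the solution set is a continuous curve over the $r$ (equivalently $T$) axis, which is exactly what the subsequent uniqueness proof consumes and does not presuppose single-valuedness over $H$. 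To repair your argument you should either prove $\pl_c\Psi\neq0$ directly (hard, and essentially uniqueness) or adopt the paper's parametrization. Your preliminary points --- continuous dependence of the ODE on parameters, and matching the formulas across $c=-8M^3H$ and across the case changes (A)/(B)/(C) --- are sound and consistent with what the paper does.
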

\begin{proof}
(A) For $(T_0,X_0)$ and $(T_0,-X_0)$ between $r=r_H$ and $r=R_H$, consider two functions
\begin{align*}
\bar{G}(r,H)=t_0-r-2M\ln|r-2M|+r_0+2M\ln|r_0-2M|-\int_{r}^{r_0}\bar{f}'(x,H,r)\,\mm{d}x,
\end{align*}
where
\begin{align*}
\bar{f}'(x,H,r)&=\fc{x^4}{(Hx^3+c)^2+x^3(x-2M)-(Hx^3+c)\st{(Hx^3+c)^2+x^3(x-2M)}}, \\
c&=c(H,r)=-Hr^3-r^{\fc32}(2M-r)^{\fc12},
\end{align*}
and
\begin{align*}
\tilde{G}(r,H)=t_0+r+2M\ln|r-2M|-r_0-2M\ln|r_0-2M|-\int_{r}^{r_0}\tilde{f}'(x,H,r)\,\mm{d}x,
\end{align*}
where
\begin{align*}
\tilde{f}'(x,H,r)&=\fc{-x^4}{(Hx^3+c)^2+x^3(x-2M)+(Hx^3+c)\st{(Hx^3+c)^2+x^3(x-2M)}}, \\
c&=c(H,r)=-Hr^3+r^{\fc32}(2M-r)^{\fc12}.
\end{align*}
Both domain of $\bar{G}$ and $\tilde{G}$ are a subset of $(0,2M]\ts\mb{R}$.
Since we have the relation
\begin{align*}
\fc{V(r,H)}{U(r,H)}=-\mm{e}^{-\fc1{2M}\bar{G}(r,H)}\ \ (\mx{region } {\tt I\!I}) \quad\mx{and}
\quad\fc{V(r,H)}{U(r,H)}=-\mm{e}^{-\fc1{2M}\tilde{G}(r,H)}\ \ (\mx{region } {\tt I\!I'}),
\end{align*}
we use the solutions of equations $\bar{G}(r,H)=0$ or $\tilde{G}(r,H)=0$
to label the throat of {\small TSS-CMC} hypersurfaces satisfying the Dirichlet problem ($\star$).
More precisely,
Theorem~\ref{Dirichletexistence} shows that for every $H\in\mb{R}$, there exists $r=r(H)$ such that $\bar{G}(r,H)=0$ or $\tilde{G}(r,H)=0$.
Hence the set $(r,H)$ of $\bar{G}(r,H)=0$ or $\tilde{G}(r,H)=0$ correspond to {\small TSS-CMC} hypersurfaces $\Sa_{H,c(H)}$.

We compute the partial derivative of the function $\bar{G}(r,H)$ with respect to $H$:
\begin{align*}
\fc{\pl \bar{G}}{\pl H}
&=-\int_{r}^{r_0}\fc{\pl \bar{f}'}{\pl H}(x,H,r)\,\mm{d}x
=-\int_{r}^{r_0}\fc{\pl f'}{\pl H}(x,H,r)\,\mm{d}x \\
&=-\int_r^{r_0}\fc1{h(x)}\fc{\fc{\pl l}{\pl H}(x,H,r)}{(1+l^2(x,H,r))^{\fc32}}\,\mm{d}x<0.
\end{align*}
By the Implicit Function Theorem,
for every $(r,H)$ with $\bar{G}(r,H)=0$,
there exists an interval $(r-\da,r+\da)$ such that the solution of $\bar{G}(r,H)=0$
in the interval can be written as a graph of a function $H=H(r)$.

Similarly, we compute
\begin{align*}
\fc{\pl \tilde{G}}{\pl H}
&=-\int_{r}^{r_0}\fc{\pl \tilde{f}'}{\pl H}(x,H,r)\,\mm{d}x
=-\int_{r}^{r_0}\fc{\pl f'}{\pl H}(x,H,r)\,\mm{d}x  \\
&=-\int_r^{r_0}\fc1{h(x)}\fc{\fc{\pl l}{\pl H}(x,H,r)}{(1+l^2(x,H,r))^{\fc32}}\,\mm{d}x<0.
\end{align*}

By the Implicit Function Theorem,
for every $(r,H)$ with $\tilde{G}(r,H)=0$,
there exists an interval $(r-\da,r+\da)$ such that the solution of $\tilde{G}(r,H)=0$
in the interval can be written as a graph of a function $H=H(r)$.

\noindent(B) For $(T_0,X_0)$ and $(T_0,-X_0)$ between $r=r_H$ and $r=0$ (between $r=0$ and $r=R_H$), consider the function
\begin{align*}
G(r,H)=t_0+\int_{r_0}^rf'(x,c)\,\mm{d}x\quad\mx{with}\quad\fc{V(r,H)}{U(r,H)}=-\mm{e}^{-\fc1{2M}G(r,H)}
\end{align*}
in region {\tt I\!I} (in region {\tt I\!I'}).
Since $\fc{\pl G}{\pl H}>0$ in region {\tt I\!I} ($\fc{\pl G}{\pl H}>0$ in region {\tt I\!I'}),
by the Implicit Function Theorem, for every $(r,H)$ with $G(r,H)=0$,
there exists an interval $(r-\da,r+\da)$ such that the solution of $G(r,H)=0$
in the interval can be written as a graph of a function $H=H(r)$.
\end{proof}

The result of Theorem~\ref{Continuity} is illustrated in Figure~\ref{GraphofHvariedI}.
We plot the graph of $H=H(r)$ in the $r$-$H$ plane.

\begin{figure}[h]
\centering
\psfrag{A}{Region {\tt I\!I}}
\psfrag{B}{Region {\tt I\!I'}}
\psfrag{S}{$P$}
\psfrag{a}{(a)}
\psfrag{b}{(b)}
\psfrag{c}{(c)}
\psfrag{P}{\footnotesize$\aa_1(r)$}
\psfrag{Q}{\footnotesize$\ba_1(r)$}
\psfrag{R}{\footnotesize$\ga_1(r)$}
\psfrag{p}{\footnotesize$\aa_2(r)$}
\psfrag{q}{\footnotesize$\ba_2(r)$}
\psfrag{s}{\footnotesize$\ga_2(r)$}
\psfrag{X}{\footnotesize$\ba_1\cup\ba_2(T)$}
\psfrag{Y}{\footnotesize$\ga_1(T)$}
\psfrag{Z}{\footnotesize$\ga_2(T)$}
\psfrag{H}{\footnotesize$H$}
\psfrag{r}{\footnotesize$r$}
\psfrag{T}{\footnotesize$T$}
\psfrag{M}{\footnotesize$2M$}
\psfrag{m}{\footnotesize$\st{2M}$}
\psfrag{n}{\footnotesize$-\st{2M}$}
\includegraphics[height=45mm,width=140mm]{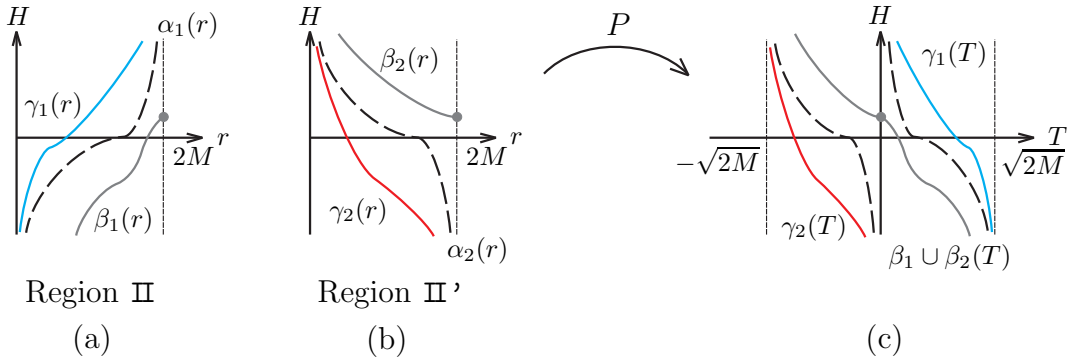}
\caption{(a) and (b): Curves $\ba_1(r), \ba_2(r)$, and $\ga_1\cup\ga_2(r)$ are solutions of $\bar{G}(r,H)=0, \tilde{G}(r,H)=0$, and $G(r,H)=0$, respectively.
They correspond to {\small TSS-CMC} hypersurfaces $\Sa_{H,c(H)}$ for different mean curvature $H$.
(c) We map all curves in Figure~\ref{GraphofHvariedI} (a) and (b) to the $T$-$H$ plane. } \label{GraphofHvariedI}
\end{figure}
In Figure~\ref{GraphofHvariedI} (a), the dotted curve $\aa_1(r)$ is the graph of $H(r)=\fc{2r-3M}{3\st{r^3(2M-r)}}$.
Each point on $\aa_1(r)$ represents a cylindrical hypersurface with constant mean curvature $H$ in region {\tt I\!I}.
Curves $\ba_1(r)$ and $\ga_1(r)$ are solutions of $\bar{G}(r,H)=0$ and $G(r,H)=0$, respectively.
Similarly, in Figure~\ref{GraphofHvariedI} (b), the dotted curve $\aa_2(r)$ is the graph of $H(r)=\fc{3M-2r}{3\st{r^3(2M-r)}}$.
Curves $\ba_2(r)$ and $\ga_2(r)$ are solutions of $\tilde{G}(r,H)=0$ and $G(r,H)=0$, respectively.
Theorem~\ref{Continuity} implies that every curve $\ba_1, \ba_2, \ga_1$, and $\ga_2$ is the graph of some continuous function $H=H(r)$.
Theorem~\ref{Dirichletexistence} implies that $\ba_1(2M)=\ba_2(2M)$.

Consider a map $P$ from these two $r$-$H$ planes in Figure~\ref{GraphofHvariedI} (a) and (b) to the $T$-$H$ plane in Figure~\ref{GraphofHvariedI} (c) by
\begin{align*}
H\stackrel{P}{\longmapsto} H\quad\mx{and}\quad r\stackrel{P}{\longmapsto} T=
\lt\{
\begin{array}{ll}
\st{2M-r}\,\mm{e}^{\fc{r}{4M}} & \mx{if } r\in(0,2M] \mx{ in region } {\tt I\!I} \\
-\st{2M-r}\,\mm{e}^{\fc{r}{4M}} & \mx{if } r\in(0,2M] \mx{ in region } {\tt I\!I'}.
\end{array}
\rt.
\end{align*}
Theorem~\ref{Continuity} shows that each curve $\ba_1\cup\ba_2(T)$, $\ga_1(T)$, and $\ga_2(T)$
is the graph of some continuous function $H=H(T)$ in the $T$-$H$ plane.

In order to prove the uniqueness of the {\small SS-CMC} Dirichlet problem $(\star)$,
it is equivalent to prove that each curve $\ba_1\cup\ba_2(T)$, $\ga_1(T)$ and $\ga_2(T)$
is the graph of a ``decreasing'' function in the $T$-$H$ plane.

\begin{thm} \label{uniqueness}
The solution of the Dirichlet problem for {\small SS-CMC} equation with symmetric boundary data {\rm(}$\star${\rm)} is unique.
\end{thm}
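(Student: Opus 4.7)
My plan is to prove uniqueness by contradiction using the Lorentzian distance estimate in Proposition~\ref{PropLaplacian}. Suppose there exist two distinct \TSC hypersurfaces $\Sa_1\neq\Sa_2$ satisfying $(\star)$ with the same mean curvature $H$ and common boundary $(T_0,\pm X_0)$. By Theorem~\ref{Taxisymmetric}, both are given by even graphs $T=F_1(X)$ and $T=F_2(X)$ on $[-X_0,X_0]$ that coincide at $X=\pm X_0$ but differ somewhere in the interior. After possibly swapping $\Sa_1$ and $\Sa_2$, I would pick a maximal symmetric open subinterval $\Omega\sbt(-X_0,X_0)$ on which $F_2>F_1$ strictly and $F_1=F_2$ on $\pl\Omega$. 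Since $\pl_T$ is globally timelike in the Kruskal extension (the conformal factor in (\ref{SchMetric2}) is positive wherever $r>0$), the pointwise inequality $F_2>F_1$ on $\Omega$ translates into $\Sa_2|_{\Omega}\sbt\ml{I}^+(\Sa_1|_{\Omega})$, so the portion of $\Sa_2$ over $\oe\Omega$ lies in the chronological future of the portion of $\Sa_1$, meeting it only along the common spherical boundary $\pl\Omega\ts\mb{S}^2$.

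With this configuration, I set $N=\Sa_1|_{\oe\Omega}$ and study $u=d_N|_{\Sa_2|_{\oe\Omega}}$. Then $u\geq 0$, $u=0$ on $\pl\Omega\ts\mb{S}^2$, and $u>0$ in the interior, so $u$ attains a positive interior maximum at some $q$. At $q$, $\na u(q)=0$ and $\Da_{\Sa_2}u(q)\leq 0$; by Theorem~\ref{Thmofperpendicular}, the future-directed unit timelike geodesic $\ga:[0,l]\to L$ realizing $u(q)=l$ meets $\Sa_2$ perpendicularly at $q$, so $\nu_{\Sa_2}(q)=\ga'(l)=-\oe\na d_N(q)$, and since $d_N$ is affine along $\ga$ one has $\oe{\mm{Hess}}(d_N)(\ga',\ga')(q)=0$. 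The Kruskal extension is Ricci-flat, so the timelike convergence condition is trivially satisfied and Proposition~\ref{PropLaplacian} yields
\begin{align*}
0\geq\Da_{\Sa_2}u(q)\geq\oe{\mm{Hess}}(d_N)(\nu,\nu)(q)+3H\st{1+|\na u(q)|^2}-3H=0,
\end{align*}
so every inequality in the chain must collapse to an equality.

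To close the argument I trace these equalities back through Lemma~\ref{LemmaLaplacian}. Since $\mm{Ric}\equiv 0$, the identity $\oe\Da d_N(q)=-3H_N(p)$ forces $\sum_{i=1}^3 I_N(J_i,J_i)=\sum_{i=1}^3 I_N(X_i,X_i)$, and the equality clause of the index-form maximization theorem (Theorem 5.4 of \cite{AHP}) then forces each parallel orthonormal frame $X_i(t)$ along $\ga$ to coincide with the corresponding normal $N$-Jacobi field $J_i(t)$ on the whole interval $[0,l]$. Since $X_i$ is parallel, $X_i''\equiv 0$, so the Jacobi equation gives $R(X_i,\ga')\ga'\equiv 0$ along $\ga$ on a basis of $(\ga')^\bt$. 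This would force the tidal tensor (the electric part of the Weyl tensor with respect to $\ga'$) to vanish at every point of $\ga$, contradicting the fact that the Schwarzschild curvature is nonzero throughout the Kruskal extension (the Kretschmann scalar equals $48M^2/r^6$).

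The main obstacle is the preliminary reduction in the first paragraph: one must ensure that a maximal symmetric subinterval $\Omega$ exists on which $F_2-F_1$ has a definite sign, and that the restricted hypersurface $\Sa_1|_{\oe\Omega}$ is still achronal with $\ml{I}^+(\Sa_1|_{\oe\Omega})\neq\emptyset$ so the framework of Section~3 applies. Here I would exploit the rigidity of the \SC initial value problem from subsection~\ref{SSCMCinKruskal}: $\Sa_1$ and $\Sa_2$ pass through the common boundary point with the same $H$ but must correspond to different integration constants $c$, hence different throat radii, and this limits the possible crossing patterns of $F_1$ and $F_2$ enough to carry out the reduction.
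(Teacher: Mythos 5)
Your argument is essentially correct but follows a genuinely different route from the paper's. The paper never applies the Lorentzian distance comparison to the two putative solutions $\Sigma_1,\Sigma_2$ themselves: since they share the same mean curvature, the comparison only yields $0\geq 0$, and the paper sidesteps the equality case entirely. Instead it first establishes Theorem~\ref{Continuity}, uses it to view the solutions of $(\star)$ as a curve $H=H(T)$ in the $T$--$H$ plane, observes that two distinct solutions with the same $H$ would force this curve to have an increasing part, and from that increasing part extracts two \emph{different} members $N,\Sigma$ of the family with $\Sigma\subset\mathcal{I}^{+}(N)$ and $H_{N}<H_{\Sigma}$ strictly; Proposition~\ref{PropLaplacian} at the interior maximum of $d_N|_{\Sigma}$ then gives the strict contradiction $0\geq 3(H_{\Sigma}-H_{N})>0$. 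You instead confront the equality case head-on: tracing $0=\Delta_{\Sigma_2}u(q)$ back through Lemma~\ref{LemmaLaplacian} and the rigidity clause of the index-form theorem to conclude the normal $N$-Jacobi fields are parallel, hence that the tidal operator $R(\cdot,\gamma')\gamma'$ vanishes along $\gamma$, which is impossible in Schwarzschild. Your route is shorter and more uniform (it bypasses Theorem~\ref{Continuity} and the region-by-region curve analysis altogether), while the paper's route avoids any curvature-rigidity discussion and, as a by-product, yields the monotonicity of $H(T)$ that is what actually gets reused for the foliation theorem in Section 5.

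Two steps in your version need shoring up. First, the final contradiction requires more than ``the curvature is nonzero'': the vanishing of $R(\cdot,\gamma')\gamma'$ on $(\gamma')^{\perp}$ is the vanishing of the electric part $E$ of the Weyl tensor relative to $\gamma'$, and a nonzero Weyl tensor could a priori be purely magnetic for some observer. Here it cannot: for a Ricci-flat metric one has $C_{abcd}C^{abcd}=8\left(|E|^{2}-|B|^{2}\right)$, so the positivity of the Kretschmann scalar $48M^{2}/r^{6}$ forces $E\neq 0$ for \emph{every} unit timelike direction; this one-line identity should be stated, since it is the actual reason your contradiction closes. Second, the inclusion $\Sigma_2|_{\Omega}\subset\mathcal{I}^{+}(\Sigma_1|_{\Omega})$ (smoothness of $d_N$ at the maximum point, no focal points up to $t=l$, and the fact that the foot point $p$ of the maximizing geodesic is an \emph{interior} point of $N$ --- the last does follow, because $p\in\partial N\subset\Sigma_2$ would put two chronologically related points on the achronal hypersurface $\Sigma_2$) is asserted rather than proved; note, however, that the paper's own proof asserts the analogous inclusion for its $N$ and $\Sigma$ with the same level of detail, so this is not a defect relative to the paper. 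The hand-waving in your last paragraph about integration constants and throat radii is unnecessary: the existence of a connected component of $\{F_2>F_1\}$ with $F_1=F_2$ on its boundary is elementary.
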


\begin{proof}
(A) Given $H_0\in\mb{R}$, suppose that there are two {\small TSS-CMC} hypersurfaces $\Sa_1:(T=F_1(X),X)$ and
$\Sa_2:(T=F_2(X),X)$ with constant mean curvature $H_0$ satisfying the Dirichlet problem ($\star$),
and suppose that $F_1(X)<F_2(X)$ in $(-X_0,X_0)$.
Then the curve $\ba_1\cup\ba_2(T)$ passes through $(T_1=F_1(X=0),H_0)$ and $(T_2=F_2(X=0),H_0)$ in the $T$-$H$ plane,
and $\ba_1\cup\ba_2(T)$ is no longer the graph of a decreasing function in the $T$-$H$ plane,
which implies that there must be some increasing part on some interval $I\sbt[T_1,T_2]$.
See Figure~\ref{UniquenessofDirichletProblem}.
Then we can take two {\small TSS-CMC} hypersurfaces $N$ and $\Sa$ such that $\Sa\sbt\ml{I}^+(N)$ in the region $T\ts[-X_0,X_0]$
in the Kruskal extension and $H_{N}<H_{\Sa}$.

\begin{figure}[h]
\centering
\psfrag{C}{\footnotesize$\ba_1\cup\ba_2(T)$}
\psfrag{Y}{\footnotesize$\ga_1(T)$}
\psfrag{Z}{\footnotesize$\ga_2(T)$}
\psfrag{H}{$H$}
\psfrag{T}{$T$}
\psfrag{X}{$X$}
\psfrag{A}{\tiny$\Sa_1$}
\psfrag{B}{\tiny$\Sa_2$}
\psfrag{I}{\footnotesize$I$}
\psfrag{h}{\footnotesize$H_0$}
\psfrag{t}{\tiny$T_1$}
\psfrag{p}{\tiny$T_2$}
\psfrag{r}{\footnotesize$r=r_H$}
\psfrag{R}{\footnotesize$r=R_H$}
\psfrag{N}{\tiny$N$}
\psfrag{S}{\tiny$\Sa$}
\psfrag{P}{\tiny$X_0$}
\psfrag{Q}{\tiny$-X_0$}
\includegraphics[height=37mm,width=125mm]{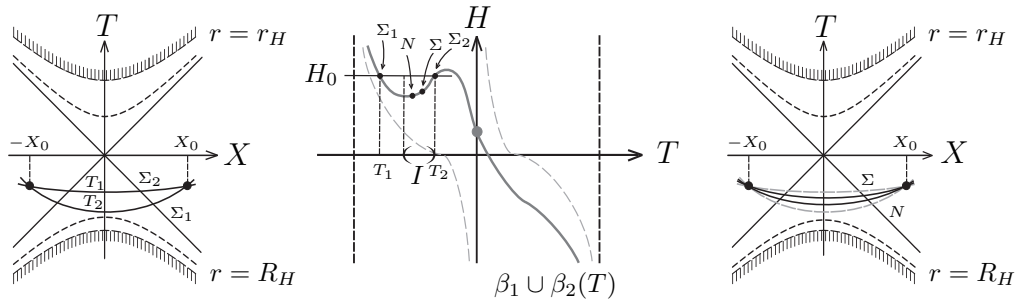}
\caption{Choose two hypersurfaces $N$ and $\Sa$ in the increasing part of $\ba_1\cup\ba_2(T)$,
and then find a contradiction to the nonuniqueness of the {\small SS-CMC} Dirichlet problem.} \label{UniquenessofDirichletProblem}
\end{figure}
Consider $d_{N}|_{\Sa}$ the Lorentzian distance function restricted on the spacelike hypersurface $\Sa$.
Since $N\neq\Sa$, we know that the maximum value of $d_N|_{\Sa}$
is positive and will achieve at some point $q$, which is in the interior of $\Sa\cap T\ts(-X_0,X_0)$ in the Kruskal extension.
We apply the estimate (\ref{EstimateofLaplace}) to $N$ and $\Sa$ and get
\begin{align*}
0\geq\Da_{\Sa}(d_{N}|_{\Sa})(q)
&\geq\overline{\mm{Hess}}(d_{N}(q))(\nu,\nu)+3H_{\Sa}(q)\st{1+|\na(d_{N}|_{\Sa})(q)|^2}-3H_{N}(p) \\
&\geq 0+3H_{\Sa}(q)-3H_{N}(p)>0,
\end{align*}
which is a contradiction. Remark that
$\overline{\mm{Hess}}(d_{N}(q))(\nu,\nu)=0$ because of the
perpendicular property in Theorem~\ref{Thmofperpendicular}.
\end{proof}

\section{Applications to CMC foliation conjecture}
Malec and \'{O} Murchadha in \cite{MO} constructed a family of $T$-axisymmetric {\small SS-CMC} ({\small TSS-CMC}) hypersurfaces in the Kruskal extension.
Each {\small TSS-CMC} hypersurface in this family shares the same constant mean curvature $H$.
They conjectured this {\small TSS-CMC} family will foliate the Kruskal extension in \cite{MO} without rigorous mathematical proof.

From the viewpoint of the {\small SS-CMC} Dirichlet problem, we will prove the existence of the {\small TSS-CMC} foliation in the Kruskal extension.
Given $H\in\mb{R}$, for all symmetric pairs $(T_0,X_0)$ and $(T_0,-X_0)$ in the Kruskal extension,
we collect all {\small TSS-CMC} hypersurfaces with mean curvature $H$ and pass through $(T_0,X_0)$ and $(T_0,-X_0)$.
We denote this {\small TSS-CMC} family by $\{\Sa_H\}$.
Since $(T_0,X_0)$ and $(T_0,-X_0)$ will be taken in the whole Kruskal extension,
$\{\Sa_H\}$ must cover the whole Kruskal extension.
If any two hypersurfaces $\Sa_1, \Sa_2\in\{\Sa_H\}$ are not disjoint,
then they must intersect at some symmetric pairs.
However, the uniqueness of the {\small SS-CMC} Dirichlet problem implies $\Sa_1=\Sa_2$.
Therefore, we can conclude the following theorem:
\begin{thm}
The existence and uniqueness of the Dirichlet problem {\rm(}$\star${\rm)} is equivalent to the existence of {\small TSS-CMC} foliation in the Kruskal extension.
\end{thm}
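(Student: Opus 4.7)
My plan is to prove the equivalence by establishing each direction separately; the argument is quite short, because the analytic work has already been carried out in Theorems~\ref{Dirichletexistence}, \ref{Taxisymmetric}, and \ref{uniqueness}. Fix $H\in\mb{R}$ and let $\{\Sa_H\}$ denote the family of all {\small TSS-CMC} hypersurfaces of constant mean curvature $H$ in the Kruskal extension that arise as solutions of the Dirichlet problem $(\star)$ for some symmetric boundary data. To say that $\{\Sa_H\}$ foliates the Kruskal extension means that (i) the union of the members of $\{\Sa_H\}$ is the entire Kruskal extension, and (ii) any two members of $\{\Sa_H\}$ either coincide or are disjoint.

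For the implication from existence and uniqueness of $(\star)$ to the foliation property, I would verify (i) and (ii) in turn. For covering, pick any point $(T_*,X_*)$ in the Kruskal extension, set $(T_0,X_0)=(T_*,|X_*|)$, and apply Theorem~\ref{Dirichletexistence} to produce a solution $\Sa$ of $(\star)$ with boundary data $(T_0,\pm X_0)$. Theorem~\ref{Taxisymmetric} guarantees that $\Sa$ is $T$-axisymmetric, so $\Sa$ passes through both $(T_*,X_*)$ and $(T_*,-X_*)$; hence $(T_*,X_*)\in\bigcup\{\Sa_H\}$. For disjointness, suppose $\Sa_1,\Sa_2\in\{\Sa_H\}$ share a common point $(T_*,X_*)$. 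Writing $\Sa_i:T=F_i(X)$, we have $F_i(X_*)=T_*$ for $i=1,2$, and the $T$-axisymmetry $F_i(-X)=F_i(X)$ yields $F_i(-X_*)=T_*$ as well. Thus both hypersurfaces solve $(\star)$ with the symmetric boundary data $(T_*,\pm X_*)$, and Theorem~\ref{uniqueness} forces $\Sa_1=\Sa_2$.

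The reverse implication, from the foliation property to existence and uniqueness of $(\star)$, is equally direct. Given symmetric boundary data $(T_0,\pm X_0)$, the covering property produces some $\Sa\in\{\Sa_H\}$ passing through $(T_0,X_0)$; since every member of $\{\Sa_H\}$ is $T$-axisymmetric by construction via $(\star)$, $\Sa$ also passes through $(T_0,-X_0)$, which proves existence. For uniqueness, any two hypersurfaces in $\{\Sa_H\}$ that realize the same boundary data share the pair of points $(T_0,\pm X_0)$, so by the disjointness clause of (ii) they must coincide.

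The conceptual substance of this theorem lives in Theorem~\ref{uniqueness}, whose proof relies on the Lorentzian distance function estimate of Proposition~\ref{PropLaplacian}; the bookkeeping assembled above is essentially formal. The one step that deserves a moment of reflection is the reduction ``two leaves meet $\Rightarrow$ they meet at a symmetric pair of boundary points'', which is what allows one to invoke Theorem~\ref{uniqueness}. This reduction is immediate from the graph representation $T=F(X)$ combined with $F(-X)=F(X)$, and so does not present a genuine obstacle.
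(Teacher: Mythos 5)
Your argument is correct and is essentially the same as the paper's: existence of $(\star)$ plus $T$-axisymmetry gives the covering property, and the reduction ``two leaves meeting at one point meet at a symmetric pair'' lets uniqueness of $(\star)$ force disjointness. You additionally write out the converse direction explicitly, which the paper leaves as implicit bookkeeping; this works because you define $\{\Sa_H\}$ to contain \emph{every} solution of $(\star)$, so the disjointness clause of the foliation really does recover uniqueness of the Dirichlet problem.
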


\fontsize{11}{14pt plus.5pt minus.4pt}\selectfont

\end{document}